\documentclass{article}

\textwidth=15cm
\textheight=21.5cm
\oddsidemargin=1cm

\usepackage{graphics,color} 
\usepackage{epsfig,graphicx} 
\usepackage{float,subfigure}
\usepackage[leqno]{amsmath} 
\usepackage{amssymb}  

\newcommand{\AAA}{{\mathcal A}}

\newcommand{\AC}{{\mathcal A}^{\mathsf C}}

\newcommand{\II}{{\mathbb I}}
\newcommand{\JJ}{{\mathbb J}}

\newcommand{\DA}{\partial \mathcal{A}}

\newcommand{\DAM}{\left [\DA\right]_{\mathcal{-}}}

\newcommand{\ee}{\varepsilon}

\newcommand{\cl}{\mathsf{cl}}
\newcommand{\co}{\mathsf{co}}
\newcommand{\Int}{\mathsf{int}}

\newcommand{\sgn}{{\mathsf{sign}}}
\newcommand{\meas}{{\mathsf{meas}}}
\newcommand{\NN}{{\mathbb N}}
\newcommand{\Nnn}{{\mathcal N}}
\newcommand{\RR}{{\mathbb R}}

\newcommand{\UU}{{\mathcal U}}
\newcommand{\XX}{{\mathcal X}}

\newcommand{\ds}{\displaystyle}

\newcommand{\Rset}{{\mathbb R}}

\newcommand{\tg}{\tilde{g}}
\newcommand{\bbar}[1]{\bar{\bar{#1}}}
\DeclareMathOperator*{\esup}{{\mathrm{ess.sup}}}

\newtheorem{thm}{Theorem}[section] 
\newtheorem{pr}{Proposition}[section] 
\newtheorem{cor}{Corollary}[section] 
\newtheorem{lem}{Lemma}[section] 
\newtheorem{defn}{Definition}[section] 
\newtheorem{rem}{Remark}[section] 

\title{\textbf{Barriers in nonlinear control systems with mixed constraints}}

\author{Willem Esterhuizen\footnotemark[1] \and Jean L\'{e}vine\footnotemark[1]}

\date{7 August 2015}
\begin{document}

\maketitle
\renewcommand{\thefootnote}{\fnsymbol{footnote}}

\footnotetext[1]{CAS, Math\'{e}matiques et Syst\`{e}mes,
 MINES ParisTech, PSL Research University, 35, rue Saint-Honor\'{e}, 77300 Fontainebleau, France.
(Email: \texttt{willem.esterhuizen@mines-paristech.fr, jean.levine@mines-paristech.fr}).}

\renewcommand{\thefootnote}{\arabic{footnote}}

\begin{abstract}
In this paper, we propose an extension to mixed multidimensional constraints of the problem of state and input
constrained control introduced in \cite{DeDona_siam}, where the \emph{admissible set}, namely the subset of the state space where the state and input constraints can be satisfied \emph{for all times}, was studied, with focus on its boundary. The latter may be divided in two parts, one of them being called \emph{barrier}, a semipermeable surface. We extend this notion of \emph{barrier} to the mixed case and prove that it can be constructed via a minimum-like principle involving the Karush-K\"uhn-Tucker multipliers associated to the constraints and a generalised gradient condition at its endpoints.
\end{abstract}

\begin{paragraph}{Keywords}
mixed state and input constraints, barrier, admissible set, nonlinear systems.
\end{paragraph}

\pagestyle{myheadings}
\thispagestyle{plain}
\markboth{W. ESTERHUIZEN AND J. L\'{E}VINE}{BARRIERS IN NONLINEAR CONTROL SYSTEMS WITH MIXED CONSTRAINTS }

\pagestyle{myheadings}
\thispagestyle{plain}

\section{Introduction}

This paper is an extension to \emph{mixed constraints} of the paper \cite{DeDona_siam}, the latter paper being devoted to the study of the \emph{admissible set} for a nonlinear system with \emph{pure} state and input constraints, namely constraints described by functions that depend on the state only and on the input only. The admissible set consists of all the initial conditions for which there exists a control such that the constraints are satisfied for all times. Its boundary can be divided into two complementary parts, one of which is called the \emph{barrier}, proven to satisfy a minimum-like principle, therefore allowing its construction. The barrier enjoys the special property called \emph{semi-permeability}: if the state, initiating from the interior of the admissible set, crosses the barrier, then it is guaranteed that it will violate the constraints in the future. Moroever, if the state starts outside the admissible set, no admissible trajectory can cross the barrier in the direction of the interior of the admissible set.

In the current paper the above results are extended to the case where the constraints are \emph{mixed} (see \eqref{eq:state_const}) that is, they explicitly depend upon both the control and the state, without separation of these variables. Constraints of this type have been considered in the context of optimal control: for general theoretical results, the reader may refer to \cite{Hestenes,Clarke_DiPinho,Hartl_et_al} and, for applications where these constraints occur,  to \cite{NicotraNalGor_IFACE2014} in the context of tethered UAVs, or \cite{Pesch94apractical} in other aerospace applications.

Note that, as opposed to the latter references, no \textit{a priori} optimality notion in any sense is considered in this paper. More precisely, our approach may be applied to optimisation problems as a first step to prepare and simplify them by restricting the state space to the \emph{admissible set} where an optimal solution, if any, may be found.

Another important motivation to study mixed constraints is provided by flat systems \cite{JLbook,Sira}, submitted to constraints: if we express the state and control variables in terms of a flat output $y$, namely $x= \varphi(y,\dot{y},\ldots,y^{(\alpha)})$ and $u= \psi(y,\dot{y},\ldots, y^{(\alpha+1)})$, where $y^{(k)}$ denotes the $k$th order time derivative of $y$ for an arbitrary integer $k$, the constraint $\gamma(u)\leq 0$ is transformed into $\gamma(\psi(y,\dot{y},\ldots, y^{(\alpha+1)})) \triangleq \tilde{\gamma}(y,\dot{y},\ldots,y^{(\alpha)},v) \leq 0$ where $v=y^{(\alpha +1)}$ is the new control variable, the latter constraint described by $\tilde{\gamma}$ with respect to the transformed variables being naturally of a mixed nature.

It turns out that the concepts of \emph{barrier} and \emph{semi-permeability} carry over to the mixed constraint setting and that we can construct the barrier via a minimum principle, though containing significant modifications compared to the one of \cite{DeDona_siam}. 

The main contribution associated with this generalisation is threefold: 
\begin{itemize} 
	\item Since the control $u$ is only assumed measurable with respect to time, the evolution of the constraints along the integral curves of the system may be discontinuous and requires using tools from nonsmooth analysis \cite{Clarke,Clarke_et_al_springer}.
	\item Contrary to intuition, the equation satisfied by endpoints of the barrier is not $g_i(x,u) = 0$ for some $i$, $x$ and $u$ according to \eqref{eq:state_const}. We prove that in fact the endpoints satisfy $\tilde{g}(x) \triangleq \min_{u\in U} \max_{i} g_i(x,u) = 0$ with additional generalised gradient conditions (Proposition~\ref{ult-tan-1d-pr}).
	\item To prove the minimum principle associated with the barrier (Theorem~\ref{BarrierTheorem}), we use the same duality-like argument as in \cite{DeDona_siam}: the boundary of the constrained reachable set at some time $t$, issued from any point of the barrier, is tangent to the barrier, and the respective normals of both boundaries are opposed. However, the characterisation of the extremum trajectories whose endpoints lie in the boundary of the reachable set, which constitutes the main step to prove the maximum principle, in the spirit of \cite{Lee_Markus}, had to be generalised to the mixed constraint case (see Appendix \ref{Appendix:PMP}), assuming that the extremum control is piecewise continuous. This generalisation mainly consists in the construction of suitable \emph{needle perturbations} that satisfy the constraints (Section~\ref{AppendixSec:ElementaryPert}) to generate the celebrated \emph{perturbation cone} introduced by Pontryagin and coauthors \cite{PBGM}, a cone separated from the non-reachable part of the state space by a hyperplane whose normal is, at almost every instant of time, precisely the \emph{adjoint} vector.
\end{itemize}

The paper is organised as follows: in Section \ref{sec:ConsDynCon} the problem of characterising the admissible set in the mixed constraint case is presented along with the assumptions. Then we prove that this set is closed in Section \ref{sec:AdmSetTopol} and study its boundary in Section \ref{sec:BoundAdmSet} with an emphasis on the geometric properties of the barrier in Subsections \ref{subsec:GeometricDesc} and \ref{subsec:UltimateTangen}. Then Section \ref{sec:BarrierEquation} is devoted to the derivation of the minimum principle associated with the barrier and is followed by examples in Section \ref{sec:Examples}. Final remarks and conclusions are presented in Section \ref{sec:Conclusion} and two appendices on the compactness of solutions and the maximum principle in the mixed constraint case are given in Appendix \ref{Appendix:Compactness} and Appendix \ref{Appendix:PMP} respectively.

\section{Dynamical Control Systems with Mixed Constraints}\label{sec:ConsDynCon}

We consider the following constrained nonlinear system:
\begin{align}
\label{eq:state_space}
  & \dot{x} =  f(x,u), \\
  \label{eq:initial_condition}
  & x(t_0) = x_0, \\
  \label{eq:input_constraint}
 & u  \in \UU, \\
  \label{eq:state_const}
& g_i\big(x(t), u(t)\big)  \leq   0 \quad \mathrm{for~} \mathit{a.e.~} t \in [t_0, \infty) \quad i=1,...,p
\end{align}
where $x(t)\in \Rset^{n}$, $\Rset^{n}$ being endowed with the usual topology of the Euclidean norm.\footnote{We keep the same notation $\Vert\cdot\Vert$ for the Euclidean norm of $\Rset^{p}$ for every $p\geq 1$.} 
	
We denote by $U$ a given compact convex subset of $\Rset^{m}$, expressible as 
$$U \triangleq \{u\in\RR^m : \gamma_j(u)\leq 0, j = 1,\dots,r\}$$
with $r\geq m$, where the functions $\gamma_{j}$ are convex and of class $C^2$. Further assumptions on the functions $\{\gamma_j, j=1,\dots, r\}$ and $\{g_i, i=1,\dots,p\}$, associated to the constraints, are imposed in (A4)-(A5) (see below). The input function $u$ is assumed to belong to the set $\UU$ of Lebesgue measurable functions from $[t_0, \infty)$ to  $U$, i.e. $u$ is a measurable function such that $ u(t) \in U$ for almost all $t\in [t_0, \infty)$.

$x^u(t)$ or $x^{(u,x_0)}(t)$ denotes the solution of the differential equation~\eqref{eq:state_space} with input  $u\in \UU$  and initial condition~\eqref{eq:initial_condition}.	

Let us stress that the constraints \eqref{eq:state_const}, called \emph{mixed constraints} \cite{Hestenes,Clarke_DiPinho}, depend both on the state and the control. We denote by $g(x,u)$ the vector-valued function whose $i$-th component is $g_i(x,u)$. By $g(x,u)\prec 0$ (resp. $g(x,u)\preceq 0 $) we mean $g_i(x,u) < 0$ (resp. $g_i(x,u) \leq 0$) for all $i$. By $g(x,u)\circeq 0$, we mean $g_i(x,u) = 0$ for at least one $i$.

We define the following sets:
	\begin{gather} \ds 
	G \triangleq \bigcup_{u\in U} \{x\in\RR^n : g(x,u)\preceq 0\} 
	\\ \ds 
	G_0 \triangleq \{x \in G : \min_{u\in U} \max_{i\in\{1,...,p\}}g_i(x,u)=0 \}\label{def:G0} 
	\\ \ds 
	G_{-} \triangleq \bigcup_{u\in U} \{x\in\RR^n : g(x,u) \prec 0\} 
	\\ \ds 
	U(x) \triangleq \{u \in U : g(x,u) \preceq 0 \}\quad \forall x \in G.\label{def:U(x)}
	\end{gather}
	
Given a pair $(x,u)\in \RR^n \times U$, we denote by $\II(x,u)$ the set of indices, possibly empty, corresponding to the ``active'' mixed constraints, namely:
$$\II(x,u) = \{i_1,\dots,i_{s_{1}} \} \triangleq \{ i\in \{1,\ldots,p\} : g_{i}(x,u) = 0\}$$ 
and by $\JJ(u)$ the set of indices, possibly empty, corresponding to the ``active'' input constraints:
$$\JJ(u) = \{ j_1,\dots,j_{s_{2}}\} \triangleq \{j\in \{1,\ldots r\} : \gamma_{j}(u) = 0\}.$$ 
The integer $s_{1} \triangleq \#(\II(x,u)) \leq p$ (resp. $s_{2}\triangleq \#(\JJ(u)) \leq r$) is the number of elements of $\II(x,u)$  (resp. of $\JJ(u)$). Thus, $s_1 + s_2$ represents the number of ``active'' constraints, among the $p + r$ constraints, at $(x,u)$.

According to \cite{PBGM} a \emph{Lebesgue} point, also denoted by \emph{L-point} for notational convenience, for a given control $u\in \UU$ is a time $t\in [t_0,\infty)$ such that $u$ is continuous at $t$ in the sense that there exists a bounded (possibly empty) subset $I_{0} \subset [t_0,\infty)$, of zero Lebesgue measure, which does not contain $t$, such that $u(t)= \lim_{s\rightarrow t, s\not \in I_{0}} u(s)$.
Since $u$ is Lebesgue-measurable, by Lusin's theorem, the Lebesgue measure of the complement, in $[t_0,T]$, for all finite $T$, of the set of Lebesgue points is equal to 0.

Note that if $u_{1}\in \UU$ and $u_{2}\in \UU$, and if $\tau\geq t_{0}$ is given, the concatenated input $v$, defined by $v(t)= \left\{ \begin{array}{ll} u_{1}(t)&\mbox{\textrm if~} t\in [t_{0}, \tau[\\u_{2}(t)&\mbox{\textrm if~} t \geq \tau\end{array}\right.$ satisfies $v\in \UU$. The concatenation operator relative to $\tau$ is denoted by $\Join_{\tau}$, i.e. $v=u_{1}\Join_{\tau} u_{2}$.

We further assume:
\begin{description}
\item[(A1)] $f$ is an at least $C^{2}$ vector field of $\RR^{n}$ for every $u$ in an open subset $U_1$ of $\Rset^{m}$ containing $U$, whose dependence with respect to $u$ is also at least $C^{2}$.
\item[(A2)] There exists a constant $0 < C < +\infty$ such that the following inequality holds true:
 $$\sup_{u\in U}\vert x^{T}f(x,u) \vert \leq C(1+ \Vert x \Vert^{2} ), \quad \mbox{\textrm{for all}}~x$$
\item[(A3)] The set $f(x,U)$, called the \emph{vectogram} in \cite{Isaacs}, is convex for all $x\in \RR^{n}$.
\item[(A4)]\label{A4} $g$ is at least $C^{2}$ from $\RR^{n}\times U_1$ to $\RR^p$. 
Moreover, the (row) vectors
\begin{equation}\label{row-indep-cond}
\left\{ \frac{\partial g_{i}}{\partial u}(x,u), \frac{\partial \gamma_{j}}{\partial u}(u) : i\in \II(x,u), j\in \JJ(u)\right\}
\end{equation}
are linearly independent at every $(x,u) \in \RR^{n}\times U$ for which $\II(x,u)$ or $\JJ(u)$ is non empty.\footnote{Note that this implies that $s_1 + s_2 \leq m$, with $s_{1} = \#(\II(x,u))$ and $s_{2} = \#(\JJ(u))$} We say, in this case, that the point $x$ is \emph{regular} with respect to $u$ (see e.g. \cite{PBGM,Hestenes}).
\item[(A5)] For all $i=1,\ldots, p$, the mapping $u\mapsto g_{i}(x,u)$ is convex for all $x\in \RR^{n}$.
\end{description}
\bigskip

Given  $u\in \UU$, we will say that an integral curve $x^u$ of equation \eqref{eq:state_space} defined on $[t_0,T]$ is \emph{regular} if, and only if, at each L-point $t$ of $u$, $x^u(t)$ is regular in the afore mentioned sense w.r.t. $u(t)$, and, in the opposite case, namely if $t$ is a point of discontinuity of $u$, $x^u(t)$ is regular in the afore mentioned sense w.r.t. $u(t_-)$ and $u(t_+)$, with $u(t_-) \triangleq \lim_{\tau \nearrow t, t\notin I_0}u(\tau)$ and $u(t_+)\triangleq \lim_{\tau \searrow t, t\notin I_0}u(\tau)$, $I_0$ being a suitable zero-measure set of $\RR$.

Since system (\ref{eq:state_space}) is time-invariant, the initial time $t_0$ may be taken as 0. When clear from the context, ``$\forall t$" or ``for \emph{a.e} $t$'' will mean
``$\forall t \in [0, \infty)$" or ``for \emph{a.e.} $t\in [0, \infty)$''. Note that throughout this paper \emph{a.e.} is understood with respect to the Lebesgue measure.

\section{The Admissible Set: Topological Properties}\label{sec:AdmSetTopol} 
~

\begin{defn}[Admissible States]
\label{def:admiss_states}
We will say that a state-space point $\bar{x}$ is \emph{admissible} if there exists, at least, one input function $v\in \UU$, such
that~\eqref{eq:state_space}--\eqref{eq:state_const} are
satisfied for $x_0=\bar{x}$ and $u=v$:
\begin{equation}\label{eq:Admiss_states}
 \AAA \triangleq \{\bar{x} \in G: \exists u\in \UU,~ g\big(x^{(u,\bar{x})}(t), u(t)\big) \preceq 0, \mathrm{for~} \mathit{a.e.~} t\}.
\end{equation}
\end{defn}
According to the Markovian property of the system, any point of the integral curve,
$x^{(v,\bar{x})}(t')$, $t' \in [0, \infty)$, is also an admissible point.

The complement of $\AAA$ in $G$, namely $ \AC \triangleq G\setminus\AAA$, is thus given by:
\begin{equation}\label{eq:Compl_Admiss_states}
 \AC \triangleq \{\bar{x} \in G: \forall u\in \UU,~\exists i\in\{1,...,p\},~\exists \bar{t} < +\infty,~ \mathrm{L-point,~s.t.~} g_i\big(x^{(u,\bar{x})}(\bar{t}), u(\bar{t})\big) >  0\}.
\end{equation}

From now on, all set topologies will be defined relative to $G$. We assume that both  $\AAA$ and $\AC$ contain at least one element to discard the trivial cases  $\AAA = \emptyset$ and $\AC = \emptyset$. 

We use the notations $\Int(S)$ (resp. $\cl(S)$) (resp.$\co(S)$) for the interior (resp. the closure) (resp. the closed and convex hull) of a set $S$.

As in \cite{DeDona_siam}, we also consider the family of sets $\AAA_{T}$, called finite horizon admissible sets, defined for all finite $0\leq T<+\infty$ by
$$\AAA_T  \triangleq \{\bar{x} \in G: \exists u\in \UU,~ g\big(x^{(u,\bar{x})}(t), u(t)\big) \preceq  0, \mathrm{for~} \mathit{a.e.~} t \leq T\}$$
as well as its complement $\AC_{T}$ in $G$ is given by:
$$\AC_{T} \triangleq \{\bar{x} \in G: \forall u\in \UU,~\exists i\in\{1,...,p\},~\exists \bar{t} \leq T,~ \mathrm{L-point}
 ~ \mathrm{s.t.} ~ g_i\big(x^{(u,\bar{x})}(\bar{t}) , u(\bar{t})\big) >  0\}.$$
Clearly, since $\AAA \subset \AAA_T$ for all finite $T$, we have $\AAA_T\not = \emptyset$.

\begin{pr}
\label{closedness-prop}
Assume that (A1)--(A5) are valid. The set of finite horizon admissible states, $\AAA_{T}$, is closed for all finite $T$.
\end{pr}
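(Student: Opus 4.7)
The plan is to prove closedness of $\AAA_T$ by a sequential compactness argument, combining convergence of trajectories with a Mazur-type passage to the limit in order to carry the mixed constraint over. I would take a sequence $\bar{x}_n\in \AAA_T$ with $\bar{x}_n\to\bar{x}$. Since the function $x\mapsto \min_{u\in U}\max_i g_i(x,u)$ is continuous (by continuity of $g$ and compactness of $U$), the set $G=\{x: \min_{u\in U}\max_i g_i(x,u)\le 0\}$ is closed, so $\bar{x}\in G$. For each $n$, admissibility furnishes a control $u_n\in\UU$ with $x_n\triangleq x^{(u_n,\bar{x}_n)}$ satisfying $g(x_n(t),u_n(t))\preceq 0$ for a.e.\ $t\in[0,T]$. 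The aim is to exhibit some $\bar{u}\in\UU$ witnessing admissibility of $\bar{x}$.

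For the trajectories, assumption (A2) yields a uniform $L^\infty$ bound on $\|x_n\|$ on $[0,T]$ via Gronwall's inequality applied to $\|x_n\|^2$, and (A1) then gives equicontinuity. Extracting subsequences via Ascoli-Arzelà one has $x_n\to\bar{x}(\cdot)$ uniformly on $[0,T]$, while $\{u_n\}$, being bounded in $L^\infty([0,T];U)$, admits a further subsequence weakly convergent in $L^2$ to some $\bar{u}$; convexity and closedness of $U$ ensure $\bar{u}(t)\in U$ a.e. The convexity of the vectogram (A3), together with the compactness theorem proved in Appendix~\ref{Appendix:Compactness}, guarantees that $\dot{\bar{x}}(t)=f(\bar{x}(t),\bar{u}(t))$ a.e., so $\bar{x}(\cdot)=x^{(\bar{u},\bar{x})}$.

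It remains to verify $g_i(\bar{x}(t),\bar{u}(t))\le 0$ a.e.\ for each $i$. By Mazur's lemma, convex combinations $v_n=\sum_{k\ge n}\alpha_{n,k}u_k$ of the tail of $\{u_n\}$, with finitely many nonzero $\alpha_{n,k}\ge 0$ summing to $1$, converge to $\bar{u}$ strongly in $L^2$ and, along a further subsequence, pointwise a.e.; each $v_n(t)$ lies in $U$ by convexity. The uniform convergence $x_n\to\bar{x}$ and uniform continuity of $g_i$ on a suitable compact set yield a sequence $\eta_k\to 0$ such that
\[
g_i(\bar{x}(t),u_k(t))\le g_i(x_k(t),u_k(t))+\eta_k \le \eta_k \quad \text{for a.e. } t\in[0,T].
\]
Assumption (A5) then gives
\[
g_i(\bar{x}(t),v_n(t))\le \sum_{k\ge n}\alpha_{n,k}\,g_i(\bar{x}(t),u_k(t))\le \sup_{k\ge n}\eta_k \;\longrightarrow\; 0 \quad (n\to\infty).
\]
Passing to the pointwise limit and using continuity of $g_i$ produces $g_i(\bar{x}(t),\bar{u}(t))\le 0$ a.e., whence $\bar{x}\in\AAA_T$.

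The main obstacle is this last step: transferring the sign of $g_i$ from $(x_n,u_n)$ to the limit $(\bar{x},\bar{u})$ when only the state converges strongly. The indispensable ingredients are the convexity of $g_i$ in $u$ from (A5), without which convex combinations of controls could not be dominated, and Mazur's lemma, which upgrades weak convergence to strong convergence along convex combinations. Together with (A3), which is what allows the weak limit $\bar{u}$ to generate a \emph{bona fide} trajectory of the system, these three ingredients close the argument.
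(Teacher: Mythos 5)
Your overall strategy (Ascoli--Arzel\`a for the trajectories, Mazur plus convexity of $g$ in $u$ for the constraint, a selection from the convex vectogram for the dynamics) is the same route the paper takes; the paper simply packages all of it inside Lemma~\ref{compact-lem} and cites that lemma. However, there is a genuine gap in your argument. You take $\bar{u}$ to be a weak $L^2$ limit of the controls $u_n$ and then assert that $\dot{\bar{x}}(t)=f(\bar{x}(t),\bar{u}(t))$ a.e.\ for this same $\bar u$. That does not follow and is in general false: $f$ is nonlinear in $u$, so weak convergence of the controls does not pass through $f$. For instance, with $f(x,u)=u^2$, $U=[-1,1]$ and $u_n$ oscillating between $\pm 1$ on a fine mesh, $u_n\rightharpoonup 0$ weakly while $f(x_n,u_n)\equiv 1$, so the limit trajectory has slope $1\ne f(\bar{x},0)$. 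What Lemma~\ref{compact-lem} actually supplies is a control produced by a Filippov-type measurable selection matching the weak limit of the vector fields $f(x_n,u_n)$, which lies in $f(\bar x(t),U)$ by (A3); this selected control is in general not the weak limit of the $u_n$.

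This matters because your Mazur computation establishing $g_i(\bar x(t),\bar u(t))\le 0$ is carried out for the weak-limit control, while the limit trajectory is generated by the Filippov-selected control; you have not exhibited a single $\bar u\in\UU$ that does both jobs. The paper circumvents this by applying Mazur and the selection argument \emph{jointly} to the pair $\bigl(f(x^{(u_k,x_k)},u_k),\,g(x^{(u_k,x_k)},u_k)\bigr)$, so that the measurable selection simultaneously realizes the limiting vector field (using (A3)) and the limiting constraint value (using (A5)). This joint transfer is precisely what the ``Moreover'' clause of Lemma~\ref{compact-lem} asserts, which is why the paper's proof of the proposition reduces to a citation of that lemma; the cleanest repair of your argument is to do the same rather than to reconstruct the constraint transfer for the wrong $\bar u$.
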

\begin{proof}
The proof follows the same lines as Proposition 4.1 of \cite{DeDona_siam}, up to small changes. We sketch it for the sake of completeness.

Consider a sequence of initial states $\{ x_{k} \}_{k\in \NN}$ in $\AAA_T$ converging to $\bar{x}$ as $k$ tends to infinity. By definition of $\AAA_T$, for every $k\in \NN$, there exists $u_{k}\in \UU$ such that the corresponding integral curve $x^{(u_{k},x_{k})}$ satisfies $g(x^{(u_{k},x_{k})}(t), u_{k}(t)) \preceq 0$ for \emph{a.e.} $t\in [0,T]$. According to Lemma~\ref{compact-lem}, there exists a uniformly converging subsequence, still denoted by $x^{(u_{k},x_{k})}$, to the absolutely continuous integral curve $x^{(\bar{u},\bar{x})}$ for some $\bar{u}\in \UU$. Moreover, we have 
$g(x^{(\bar{u},\bar{x}))}(t), \bar{u}(t)) \preceq 0$ for almost every $t\in [0,T]$, hence $\bar{x}\in \AAA_T$, and the proposition is proven.
\end{proof}

\begin{cor}\label{close-cor}
Under the assumptions of Proposition~\ref{closedness-prop}, the set $\AAA$ is closed.
\end{cor}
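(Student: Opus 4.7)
The plan is to represent $\AAA$ as a countable intersection of the closed sets $\AAA_n$, $n\in\NN$, given by Proposition~\ref{closedness-prop}, and to conclude by the fact that any intersection of closed sets is closed. The inclusion $\AAA\subset\bigcap_{n\in\NN}\AAA_n$ is immediate from the definitions, since a control admissible on $[0,\infty)$ is, \emph{a fortiori}, admissible on each $[0,n]$. The substantive content lies in the reverse inclusion, which requires producing, from a family of controls each admissible on a finite horizon, a single control admissible on $[0,\infty)$.

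For this, fix $\bar{x}\in\bigcap_{n\in\NN}\AAA_n$ and, for every $n$, pick $u_n\in\UU$ such that $g(x^{(u_n,\bar{x})}(t),u_n(t))\preceq 0$ for a.e. $t\in[0,n]$. I would proceed by a nested-extraction diagonal argument using Lemma~\ref{compact-lem}. Applying this lemma on $[0,1]$ to the sequence $(u_n,\bar{x})_{n}$ yields a first subsequence whose state trajectories converge uniformly on $[0,1]$ to $x^{(\bar{u}^{(1)},\bar{x})}$, with $\bar{u}^{(1)}\in\UU$ and the constraints satisfied a.e. on $[0,1]$. From this subsequence, a further extraction on $[0,2]$ gives a limit $\bar{u}^{(2)}$, admissible on $[0,2]$, which must coincide with $\bar{u}^{(1)}$ a.e. on $[0,1]$ because the second extraction is a sub-subsequence of the first, so both limits agree on the overlap up to a null set. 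Iterating produces, for each $n$, an admissible control $\bar{u}^{(n)}$ on $[0,n]$ extending $\bar{u}^{(n-1)}$, and the Cantor diagonal sequence defines a single $\bar{u}\in\UU$ on $[0,\infty)$ by setting $\bar{u}=\bar{u}^{(n)}$ on $[0,n]$. The resulting trajectory $x^{(\bar{u},\bar{x})}$ satisfies $g(x^{(\bar{u},\bar{x})}(t),\bar{u}(t))\preceq 0$ for a.e. $t\in[0,n]$ for every $n$, hence for a.e. $t\in[0,\infty)$, which proves $\bar{x}\in\AAA$.

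Combining the two inclusions gives $\AAA=\bigcap_{n\in\NN}\AAA_n$, and closedness follows at once.

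The main obstacle is the consistency of the iterated limit controls: one must verify that the $\bar{u}^{(n)}$ glue together into a bona fide measurable $\bar{u}\in\UU$. This relies only on the nested structure of the extractions and on the fact that Lemma~\ref{compact-lem} provides both uniform convergence of the states on each compact time interval and preservation of the mixed constraints in the limit, which are precisely what the argument above exploits.
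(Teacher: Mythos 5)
Your approach is correct in outline, but there is one unjustified step and, more importantly, a detour that the paper's compactness lemma is designed to make unnecessary.

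The gap: you assert that the limit control $\bar{u}^{(2)}$ obtained on $[0,2]$ ``must coincide with $\bar{u}^{(1)}$ a.e.\ on $[0,1]$ because the second extraction is a sub-subsequence of the first.'' Nested extractions force the limit \emph{trajectories} to coincide on the overlap, but Lemma~\ref{compact-lem} produces $\bar{u}$ via Mazur's theorem followed by a measurable selection, neither of which is canonical: the map $u\mapsto f(x,u)$ need not be injective, so several controls can realise the same limit $\bar{F}$ and $\bar{g}$. Hence there is no reason for $\bar{u}^{(2)}|_{[0,1]}$ to equal $\bar{u}^{(1)}$ up to a null set, and the piecewise definition ``$\bar{u}=\bar{u}^{(n)}$ on $[0,n]$'' is, as written, ambiguous. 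The standard fix is to set $\bar{u}$ equal to $\bar{u}^{(n)}$ only on $[n-1,n[$ and then use uniqueness of solutions of \eqref{eq:state_space}, together with the coincidence at $t=n-1$ of the nested limit trajectories, to deduce inductively that $x^{(\bar{u},\bar{x})}=x^{(\bar{u}^{(n)},\bar{x})}$ on $[0,n]$, so that the constraint is inherited on $[n-1,n[$ from $\bar{u}^{(n)}$. With this patch, your argument closes.

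The more structural observation is that the nested-extraction work you carry out by hand is already packaged inside Lemma~\ref{compact-lem}: that lemma returns a \emph{single} subsequence converging uniformly on every $[0,T]$ to an integral curve $x^{(\bar{u},\bar{x})}$ defined on all of $[0,\infty)$, with a \emph{single} $\bar{u}\in\UU$, and preserves $g\preceq 0$ a.e.\ on $[0,\infty)$ in the limit. The direct route, which is what the paper (deferring to \cite{DeDona_siam}) has in mind, therefore bypasses the decomposition $\AAA=\bigcap_n\AAA_n$ entirely: take $x_k\to\bar{x}$ with $x_k\in\AAA$, pick $u_k\in\UU$ admissible on $[0,\infty)$, apply Lemma~\ref{compact-lem}, and read off $\bar{x}\in\AAA$. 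This also sidesteps a second, milder annoyance in your route: when you apply the lemma to controls that are only admissible on $[0,n]$, its constraint-preservation clause, stated for controls admissible on all of $[0,\infty)$, does not apply verbatim and would need a word of adaptation. Your $\AAA=\bigcap_n\AAA_n$ identity is true and mildly illuminating (it is the exact analogue of what Proposition~\ref{closedness-prop} quantifies over), but proving it is not easier than proving closedness directly, so the detour adds exposure to the control-ambiguity issue without saving work.
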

\begin{proof}
See proof of Corollary 4.1 of \cite{DeDona_siam}.
\end{proof}

\section{Boundary of the Admissible Set}\label{sec:BoundAdmSet}
\subsection{A Characterisation of $\AAA$, its Complement and its Boundary}

Denoting by $\DA_{T}$ (resp. $\DA$) the boundary of $\AAA_{T}$ (resp. $\AAA$), we know from Proposition~\ref{closedness-prop} and Corollary~\ref{close-cor} that $\DA_{T}\subset \AAA_T$ (resp. $\DA \subset \AAA$). Following \cite{DeDona_siam}, we focus on the properties and
characterisation of these boundaries.

We  first prove the following result, where the notation $\esup_{t\in [0,\infty)} h(t)$, with $h : [0,\infty) \rightarrow \RR$  measurable, stands for the $L^{\infty}(0,\infty)$-norm of $h$.

\begin{pr} \label{min-sup-prop}
Assume that (A1)--(A5) hold.  We have the following equivalences:

\noindent (i) \, $\bar{x} \in \AAA$ is equivalent to
\begin{equation}\label{AAAopt}
\min_{u\in \UU} \esup_{t\in [0,\infty)} \max_{i=1,\ldots,p} g_{i}(x^{(u,\bar{x})}(t),u(t)) \leq 0
\end{equation}

\noindent (ii) \, $\bar{x} \in \AC$ is equivalent to
\begin{equation}\label{ACopt}
\min_{u\in \UU} \esup_{t\in [0,\infty)} \max_{i=1,\ldots,p} g_{i}(x^{(u,\bar{x})}(t),u(t)) >  0
\end{equation}

\noindent (iii) \, $\bar{x} \in \DA$ is equivalent to
\begin{equation}\label{DAopt}
\min_{u\in \UU} \esup_{t\in [0,\infty)} \max_{i=1,\ldots,p} g_{i}(x^{(u,\bar{x})}(t),u(t)) = 0.
\end{equation}
\end{pr}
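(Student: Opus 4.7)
The plan is to recast all three equivalences in terms of the single value function $V(\bar{x}) \triangleq \inf_{u\in\UU}\esup_{t\in[0,\infty)}\max_{i}g_i(x^{(u,\bar{x})}(t), u(t))$, so that (i)--(iii) become the three level-set identities $\AAA = \{V \leq 0\}$, $\AC = \{V > 0\}$, $\DA = \{V = 0\}$, together with the additional task of showing that the infimum is attained (which is what justifies the ``$\min$'' notation). The central observation is that, for a fixed $u\in\UU$, the condition $g(x^{(u,\bar{x})}(t), u(t))\preceq 0$ for a.e.\ $t$ is strictly equivalent to $\esup_t\max_i g_i(x^{(u,\bar{x})}(t), u(t))\leq 0$, which translates the defining property of $\AAA$ into an optimisation statement.

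For (i), the forward implication is immediate: any witness $u^{*}$ of $\bar{x}\in\AAA$ yields $V(\bar{x})\leq 0$. The converse, together with attainment, is the main step: I would pick a minimising sequence $(u_k)\subset\UU$ with $\esup\max_i g_i(x^{(u_k,\bar{x})},u_k) \to V(\bar{x})$, and apply Lemma~\ref{compact-lem} on each compact interval $[0,n]$ combined with a diagonal extraction, producing a subsequence whose trajectories $x^{(u_k,\bar{x})}$ converge uniformly on compacts to an absolutely continuous curve identified with $x^{(u^{*},\bar{x})}$ for some $u^{*}\in\UU$. Because Lemma~\ref{compact-lem} preserves the componentwise a.e.\ inequalities $g_i\leq c$ in the limit, one obtains $\esup_t\max_i g_i(x^{(u^{*},\bar{x})},u^{*})\leq V(\bar{x})$, which both gives the attainment and, when $V(\bar{x})\leq 0$, produces a witness for $\bar{x}\in\AAA$. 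Statement (ii) then follows from (i) by passing to the complement in $G$.

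For (iii), Corollary~\ref{close-cor} yields $\DA\subset\AAA$, hence $V(\bar{x})\leq 0$ on $\DA$. To rule out $V(\bar{x})<0$, I would use that a strict witness $\esup_t\max_i g_i(x^{(u^{*},\bar{x})},u^{*}) \leq -\eta < 0$, combined with continuity of the flow in the initial condition (uniform on each $[0,n]$) and the $C^{2}$ regularity of $g$, produces an open neighbourhood of $\bar{x}$ entirely contained in $\AAA$ under the same control $u^{*}$, contradicting $\bar{x}\in\DA$. Conversely, if $V(\bar{x})=0$, then (i) places $\bar{x}$ in $\AAA$, and a symmetric perturbation argument in the outward direction of the active constraint, using the independence condition of (A4) to linearise, exhibits initial conditions arbitrarily close to $\bar{x}$ at which every admissible control must violate some $g_i$, so $\bar{x}\in\cl(\AC)\cap\AAA = \DA$. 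The hardest step I anticipate is the attainment: transferring an essential supremum over the unbounded horizon $[0,\infty)$ to a limit trajectory requires the diagonal extraction on expanding compact intervals together with the convergence properties of Lemma~\ref{compact-lem}; the assumptions (A2) of quadratic growth and (A3) of convex vectogram are precisely what allow this extraction.
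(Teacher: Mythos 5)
Your treatment of (i) and (ii) is essentially the paper's: the paper also takes a minimising sequence, invokes Lemma~\ref{compact-lem} to extract a limit $x^{(\bar u,\bar x)}$, and then transfers the bound to the limit (it does this explicitly via the Mazur/Egorov convex-combination computation rather than by re-invoking the lemma with a shifted constraint; your shortcut works provided you apply the lemma to the \emph{tail} $k\geq k_0(\ee)$ on which the uniform bound $g_i\leq V(\bar x)+\ee$ actually holds, since the bounds along the minimising sequence only reach $V(\bar x)$ in the limit). Obtaining (ii) by complementation in $G$ is legitimate because $\AC$ is \emph{defined} as $G\setminus\AAA$.

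The genuine gap is in (iii). To exclude $V(\bar x)<0$ you claim that a strict witness with $\esup_t\max_i g_i\leq-\eta$ plus continuity of the flow in the initial condition yields an open neighbourhood of $\bar x$ contained in $\AAA$. But that continuity is uniform only on compact intervals $[0,T]$ (Gronwall gives a deviation of order $\Vert y-\bar x\Vert e^{LT}$), whereas membership in $\AAA$ requires the constraints to hold for \emph{all} $t\in[0,\infty)$. Your argument therefore only places a neighbourhood of $\bar x$ in $\AAA_T$ for each finite $T$ (with the neighbourhood shrinking as $T$ grows), not in $\Int(\AAA)$; no uniform-in-time stability is available from (A1)--(A5). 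The converse is also unsupported: $V(\bar x)=0$ does not mean any constraint is active \emph{at} $\bar x$ (the essential supremum may be attained only at a distant time, or only asymptotically, along the optimal trajectory), so there is in general no ``outward direction of the active constraint'' at $\bar x$ to perturb along, and even when $\bar x\in G_0$ one would still have to show that \emph{every} control from the perturbed point fails, which (A4) alone does not give. The paper instead proves (iii) purely set-theoretically: by Corollary~\ref{close-cor}, $\AAA$ is closed, so $\DA=\AAA\cap\cl(\AC)$, and it then combines (i) with (ii) (the inequality in \eqref{ACopt} becoming ``$\geq$'' on the closure of $\AC$). If you want to keep your structure, you should replace both perturbation arguments by this decomposition rather than trying to prove openness of $\{V<0\}$ or density of $\AC$ near $\{V=0\}$ directly.
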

\begin{proof}
We first prove (i). If $\bar{x} \in \AAA$, by definition, there exists $u\in \UU$ such that 
$g(x^{(u,\bar{x})}(t),u(t))\preceq 0$ 
for almost all $t\geq 0$, and thus such that 
$\esup_{t\in [0,\infty)}  \max_{i=1,\ldots,p} g_{i}(x^{(u,\bar{x})}(t),u(t))\leq 0$. 
We immediately get
\begin{equation}\label{infless}
\inf_{u\in \UU} \esup_{t\in [0,\infty)}  \max_{i=1,\ldots,p} g_{i}(x^{(u,\bar{x})}(t),u(t)) \leq 0.
\end{equation}
 Let us prove next that the infimum with respect to $u$ is achieved by some $\bar{u}\in \UU$ in order to get (\ref{AAAopt}). To this aim, let us consider a minimising sequence $u_{k} \in \UU$, $k\in \NN$, i.e. such that
\begin{equation}\label{minseq}
\lim_{k\rightarrow \infty}  \esup_{t\in [0,\infty)}  \max_{i=1,\ldots,p} g_{i}(x^{(u_{k},\bar{x})}(t),u_{k}(t)) = \inf_{u\in \UU} \esup_{t\in [0,\infty)}  \max_{i=1,\ldots,p} g_{i}(x^{(u,\bar{x})}(t),u(t)).
\end{equation}
According to Lemma~\ref{compact-lem} in  Appendix~\ref{Appendix:Compactness}, with $x_{k}=\bar{x}$ for every $k\in \NN$, one can extract a uniformly convergent subsequence on every compact interval $[0,T]$ with $T\geq 0$, still denoted by $x^{(u_{k},\bar{x})}$, whose limit is $x^{(\bar{u},\bar{x})}$ for some $\bar{u}\in \UU$. Moreover, one can build another subsequence, made of convex combinations of the $\{ g(x^{(u_{k},\bar{x})}, u_{k}) \}$, namely $\sum_{j=1}^{k} \alpha_{i,j}^{k} g_{i}(x^{(u_{j},\bar{x})}, u_{j})$, where the $\alpha_{i,j}^{k}$'s 
are all non negative real numbers such that $\sum_{j=1}^{k}\alpha_{i,j}^{k} = 1$ for all $i=1,\ldots, p$ and $k\geq 1$,  that pointwise converges to $g(x^{(\bar{u},\bar{x})}, \bar{u})$ a.e. $t \in [0,T]$ for all $T\geq 0$.

According to Egorov's theorem \cite{yosida}, the pointwise convergence implies that, for almost every $t\in [0,T]$, all $T\geq 0$ and $\ee >0$, there exists $k_{0}(t,T,\ee) \in \NN$ such that, for every $k\geq k_{0}(t,T,\ee)$, 
$$g_{i}(x^{(\bar{u},\bar{x})}(t), \bar{u}(t)) \leq \sum_{j=1}^{k} \alpha_{i,j}^{k} g_{i}(x^{(u_{j},\bar{x})}(t), u_{j}(t)) +\ee.$$ 
Taking the maximum with respect to $i \in \{ 1, \ldots, p \}$ and 
the essential supremum w.r.t.\ $t\in [0,\infty)$ on the right hand side, we get
$$
g_{i}(x^{(\bar{u},\bar{x})}(t), \bar{u}(t)) \leq  \sum_{j=1}^{k} \alpha_{i,j}^{k}  \esup_{s\in [0,\infty)} \max_{i=1,\ldots, p} g_{i}(x^{(u_{j},\bar{x})}(s), u_{j}(s)) + \ee 
\qquad \forall t \in [0,T]. 
$$

On the other hand, by the definition of the limit in (\ref{minseq}), for every $\ee >0$ 
there exists $k_{1}(\ee) \in \NN$ such that for all $j\geq k_{1}(\ee)$, we have
$$\esup_{t\in [0,\infty)}  \max_{i=1,\ldots, p}g_{i}(x^{(u_{j},\bar{x})}(t), u_{j}(t)) \leq  \inf_{u\in \UU} \esup_{t\in [0,\infty)}  \max_{i=1,\ldots, p}g_{i}(x^{(u,\bar{x})}(t), u(t)) + \ee$$
and thus
$$
g_{i}(x^{(\bar{u},\bar{x})}(t), \bar{u}(t)) \leq  \sum_{j=1}^{k} \alpha_{i,j}^{k}  \inf_{u\in \UU} \esup_{t\in [0,\infty)}  \max_{i=1,\ldots, p}g_{i}(x^{(u,\bar{x})}(t), u(t)) + 2\ee 
\qquad \forall t \in [0,T]. 
$$
Hence, using the fact that 
$\sum_{j=1}^{k}\alpha_{i,j}^{k} = 1$, 
for all $k\geq \max(k_{0}(t,T,\ee),k_{1}(\ee))$, we get
$$
g_{i}(x^{(\bar{u},\bar{x})}(t), \bar{u}(t)) \leq  \inf_{u\in \UU} \esup_{t\in [0,\infty)}  \max_{i=1,\ldots, p}g_{i}(x^{(u,\bar{x})}(t), u(t)) + 2\ee \quad \mbox{\textrm{a.e.}}~ t\in [0,T], \quad \forall i=1,\ldots,p.
$$
However, since the latter inequality is valid for any $t$ and
$T \geq 0$ and it does not depend on $k$ anymore, and since its right-hand side is independent of 
$i$,  
$t$ and $T$, we have that the inequality holds if we maximize the left-hand side with respect to  $i\in \{ 1,\ldots,p\}$ and take its essential supremum with respect to $t\in [0,\infty)$. 
Thus, using the definition of the infimum w.r.t.\ $u$, we obtain that,
for every $\ee > 0$
$$
\begin{aligned}
\esup_{t\in [0,\infty)} \max_{i=1,\ldots,p} g_{i}(x^{(\bar{u},\bar{x})}(t), \bar{u}(t)) 
&\leq  \inf_{u\in \UU} \esup_{t\in [0,\infty)}  \max_{i=1,\ldots, p}g_{i}(x^{(u,\bar{x})}(t), u(t)) + 2\ee 
\\&\leq  \esup_{t\in [0,\infty)}  \max_{i=1,\ldots, p}g_{i}(x^{(\bar{u},\bar{x})}(t), \bar{u}(t))+ 2\ee,
\end{aligned}
$$
or, using also~(\ref{infless}), that
$$ \esup_{t\in [0,\infty)}  \max_{i=1,\ldots, p}g_{i}(x^{(\bar{u},\bar{x})}(t), \bar{u}(t)) = \inf_{u\in \UU} \esup_{t\in [0,\infty)}  \max_{i=1,\ldots, p}g_{i}(x^{(u,\bar{x})}(t), u(t)) \leq 0,$$
which proves (\ref{AAAopt}).

Conversely, if (\ref{AAAopt}) holds, there exists an input $u\in \UU$ such that
$$\esup_{t\in [0,\infty)}  \max_{i=1,\ldots, p}g_{i}(x^{(u,\bar{x})}(t), u(t)) \leq 0,$$
which in turn implies that
$g(x^{(u,\bar{x})}(t), u(t))\preceq 0$ 
for almost all $t\geq 0$, or, in other words, $\bar{x} \in \AAA$, which achieves the proof of (i).

To prove (ii), we now assume that $\bar{x} \in \AC$ and prove (\ref{ACopt}). By definition of $\AC$, for all $u\in \UU$, we have 
$\esup_{t\in [0,\infty)}  \max_{i=1,\ldots, p}g_{i}(x^{(u,\bar{x})}(t)) > 0$ 
and thus
$$\inf_{u\in \UU}\esup_{t\in [0,\infty)}  \max_{i=1,\ldots, p}g_{i}(x^{(u,\bar{x})}(t), u(t)) \geq 0.$$
The same minimising sequence argument as in the proof of (i) shows that the minimum over $u\in \UU$ is achieved by some $\bar{u}\in \UU$ and that
$$\min_{u\in \UU}\esup_{t\in [0,\infty)}  \max_{i=1,\ldots, p}g_{i}(x^{(u,\bar{x})}(t), u(t)) \geq 0.$$
But the inequality has to be strict since, if 
$\ds \min_{u\in \UU}\esup_{t\in [0,\infty)}  \max_{i=1,\ldots, p}g_{i}(x^{(u,\bar{x})}(t), u(t)) = 0$, 
it would imply, according to (i), that $\bar{x}\in \AAA$ which contradicts the assumption. Therefore, we have proven  (\ref{ACopt}).

Conversely, if  (\ref{ACopt}) holds, it is immediately seen that $\bar{x}$ is such that 
\sloppy
 $\ds \esup_{t\in [0,\infty)}  \max_{i=1,\ldots, p}g_{i}(x^{(u,\bar{x})}(t), u(t)) > 0$ for all $u\in \UU$. 
\sloppy
The essential supremum with respect to $t$ must be reached at some $\bar{t}(u) < +\infty$ since $\bar{t}(u) = +\infty$ would imply that 
$\max_{i=1,\ldots, p}g_{i}(x^{(u,\bar{x})}(t), u(t)) \leq 0$ 
for almost all $t< +\infty$, and thus 
$\ds \esup_{t\in [0,\infty)}  \max_{i=1,\ldots, p}g_{i}(x^{(u,\bar{x})}(t), u(t)) \leq 0$. 
A fortiori, 
$\ds \min_{u\in \UU} \esup_{t\in [0,\infty)}  \max_{i=1,\ldots, p}g_{i}(x^{(u,\bar{x})}(t), u(t)) \leq 0$, 
which contradicts (\ref{ACopt}).
Thus, for all $u\in \UU$, there exists $\bar{t}(u) < +\infty$ such that 
$\ds \max_{i=1,\ldots, p} g_{i}(x^{(u,\bar{x})}(\bar{t}(u)), u(\bar{t}(u))) > 0$, 
and hence $\bar{x}\in \AC$, which proves (ii).

To prove (iii), since $\AAA$ is closed, $\bar{x} \in \DA$ is equivalent to $\bar{x}\in \AAA$ and $\bar{x}\in \cl(\AC)$, the closure of $\AC$, which, by (i) and (ii), is equivalent to (\ref{AAAopt}) and (\ref{ACopt}) (the latter with a ``$\geq$'' symbol as a consequence of $\bar{x}\in \cl(\AC)$), which in turn is equivalent to  
(\ref{DAopt}).
\end{proof}

\begin{rem}
The same formulas hold true for $\AAA_T$, $\AC_T$ and $\DA_T$ if one replaces the infinite time interval $[0,\infty)$ by $[0,T]$.
\end{rem}

\subsection{Geometric Description of the Barrier}\label{subsec:GeometricDesc}
As a consequence of \eqref{DAopt}, the boundary $\DA$ is made of points $\bar{x}$ such that there exists a $\bar{u}\in \UU$  for which at least one of the constraints is saturated for some L-point $\bar{t}$, i.e. $g(x^{(\bar{u},\bar{x})}(\bar{t}),\bar{u}(\bar{t}))\circeq 0$.  As in \cite{DeDona_siam}, let us define the set:
\[
\DAM = \DA\cap G_-
\]

\begin{defn}\label{def:barrier}
The set $\DAM$ is called the \emph{barrier} of the set $\AAA$ (see Corollary~\ref{bar-sem-cor} and \cite{DeDona_siam}).
\end{defn}

\begin{pr}\label{boundary:prop}
Assume (A1) to (A5) hold. $\DAM$ is made of points $\bar{x}\in G_-$ for which there exists $\bar{u}\in\UU$ and an integral curve $x^{(\bar{u},\bar{x})}$ entirely contained in $\DAM$ until it intersects $G_0$, i.e. at a point $z = x^{(\bar{u},\bar{x})}(\tilde{t})$, for some $\tilde{t}$, such that $\min_{u\in U} \max_{i=1,\ldots,p} g_{i}(z,u) = 0$.
\end{pr}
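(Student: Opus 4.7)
The plan is to combine Proposition~\ref{min-sup-prop}(iii) with the Markov property of the dynamics, in the spirit of \cite{DeDona_siam}. Since $\bar x\in\DAM\subset\DA$, Proposition~\ref{min-sup-prop}(iii) supplies a control $\bar u\in\UU$ realising the minimum in (\ref{DAopt}), so that
$$\esup_{t\geq 0}\max_{i=1,\ldots,p}g_i(x^{(\bar u,\bar x)}(t),\bar u(t))=0.$$
Write $x(\cdot):=x^{(\bar u,\bar x)}(\cdot)$ and introduce the first exit time from $G_-$,
$$\tilde t:=\inf\{t\geq 0:x(t)\notin G_-\}\in(0,+\infty].$$
Continuity of $g$ (assumption (A4)) makes $G_-$ open, which together with $\bar x\in G_-$ gives $\tilde t>0$; a direct comparison of the defining formulas of $G$, $G_-$ and $G_0$ yields the partition $G=G_-\sqcup G_0$. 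Once $x(t)\in\AAA\subset G$ for every $t\in[0,\tilde t)$ is established (see below), the closedness of $\AAA$ (Corollary~\ref{close-cor}) together with continuity of $x(\cdot)$ forces $x(\tilde t)\in\AAA\setminus G_-=G_0$ whenever $\tilde t<+\infty$, supplying the point $z$ of the statement.

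The heart of the argument is to show that $x(t)\in\DAM$ for every $t\in[0,\tilde t)$. Membership in $G_-$ is built into the definition of $\tilde t$; membership in $\AAA$ is an immediate Markov-property consequence, since the shifted control $s\mapsto\bar u(s+t)$ starting at $x(t)$ generates the tail $s\mapsto x(s+t)$, whose essential supremum of $\max_i g_i$ is bounded above by that of the whole trajectory, hence by $0$, and Proposition~\ref{min-sup-prop}(i) then yields $x(t)\in\AAA$. To upgrade this to $x(t)\in\DA=\AAA\cap\cl(\AC)$, I would argue by contradiction: assuming $x(t_1)\in\Int(\AAA)$ for some $t_1\in(0,\tilde t)$, pick a ball $B(x(t_1),\rho)\subset\AAA$ and use continuous dependence on the initial state (from (A1) via Gronwall) to obtain a neighbourhood $N$ of $\bar x$ with $x^{(\bar u,y)}(t_1)\in B(x(t_1),\rho)\subset\AAA$ for every $y\in N$. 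Exploiting $x(s)\in G_-$ for all $s\in[0,t_1]$, a measurable-selection procedure modifies $\bar u$ into some $\tilde u$ that is strictly feasible along the nominal trajectory on $[0,t_1]$; continuous dependence then preserves these strict inequalities under perturbations of the initial state, so concatenating $\tilde u$ on $[0,t_1]$ with an admissible continuation issued from $x^{(\tilde u,y)}(t_1)\in\AAA$ yields $y\in\AAA$ for every $y$ in a smaller neighbourhood of $\bar x$, in contradiction with $\bar x\in\DA$.

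The main obstacle is precisely this last manoeuvre: because $\bar u$ may saturate a mixed constraint on a set of positive measure while the state remains inside $G_-$, continuous dependence on the initial state alone does not preserve the feasibility of perturbed trajectories, and the control must be perturbed simultaneously. The resolution leans on the openness of $G_-$, the $C^2$ regularity of $g$ provided by (A4) and a measurable selection of strictly feasible control values over the saturation times; the independence condition in (A4) on the active-constraint gradients is what guarantees that this selection can be carried out without losing measurability, and it is also the ingredient that ties the argument to the rest of the paper.
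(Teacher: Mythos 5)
Your overall skeleton is reasonable (get $\bar u$ from Proposition~\ref{min-sup-prop}(iii), use the Markov property for membership in $\AAA$, note $G=G_-\sqcup G_0$, and argue the curve remains on the boundary), but it diverges from the paper at the crucial step of showing $x^{(\bar u,\bar x)}(t)\in\DA$, and there you hit a genuine gap. The paper works directly with the $\min$--$\esup$ characterisation: for $\xi=x^{(\bar u,\bar x)}(t_0)$, the shifted control $\nu(\cdot)=\bar u(t_0+\cdot)$ realises $\esup\leq 0$, and if $\min_{u\in\UU}\esup$ were strictly negative at $\xi$, \emph{concatenating} $\bar u$ on $[0,t_0]$ with a strictly improving control issued from $\xi$ would force $\bar x\in\Int(\AAA)$ by \eqref{AAAopt}--\eqref{DAopt} — a concatenation argument that never modifies the control on the initial segment, so the nominal trajectory is unchanged. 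Your route instead argues by contradiction via continuous dependence, and you correctly identify the obstacle: the constraint may be saturated on a set of positive measure while the state stays in $G_-$. But the resolution you sketch — modifying $\bar u$ by a measurable-selection procedure into a strictly feasible $\tilde u$ and then invoking continuous dependence — does not work as stated, because changing the control changes the integral curve: once you replace $\bar u$ by $\tilde u$, the solution is $x^{(\tilde u,\bar x)}$, not $x^{(\bar u,\bar x)}$, so the strict inequalities you gain hold along a \emph{different} curve, and there is no reason for $x^{(\tilde u,y)}(t_1)$ to land in the ball $B(x^{(\bar u,\bar x)}(t_1),\rho)\subset\AAA$. You would need a simultaneous control/initial-state perturbation scheme (something like the implicit-function construction that the paper carries out in Appendix~\ref{AppendixSec:ElementaryPert} and Lemma~\ref{lemma:convergenceVtau}), not a mere measurable selection.

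The second gap is that you never establish $\tilde t<+\infty$; you only say ``whenever $\tilde t<+\infty$''. This is precisely where the paper's proof expends its main effort. Starting from nearby points $\bar x+\ee h\in\AC$, it shows by contradiction (the semi-permeability argument underlying Corollary~\ref{bar-sem-cor}) that no trajectory from $\AC$ can enter $\Int(\AAA)$ before leaving $G_-$, uses the equivalence $x\in G_-\Leftrightarrow\Int(U(x))\neq\emptyset$ to propagate $\DAM$-membership along the curve, and then shows that $\bbar t\triangleq\sup\{t:x^{(\bar u,\bar x)}(t)\in\DAM\}$ is finite and satisfies $\min_{v\in U}\max_i g_i(x^{(\bar u,\bar x)}(\bbar t),v)=0$. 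Without this, the conclusion that the barrier trajectory actually reaches $G_0$ in finite time is unsupported, and the statement of the proposition is not fully proved. Finally note a subtlety that your exit-time framing hides: the paper's first saturation time $\bar t$ (first $t$ with $\max_i g_i(x^{(\bar u,\bar x)}(t),\bar u(t))=0$) need not coincide with your $\tilde t$ (first exit from $G_-$), since a point can saturate the constraint under $\bar u$ while another admissible $u$ still gives strict feasibility; the paper's argument has to bridge precisely this distinction, and yours glosses over it.
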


\begin{proof}
	Let $\bar{x}\in \DAM$, therefore satisfying (\ref{DAopt}). In particular, there exists $\bar{u}\in \UU$ and $\bar{t} > 0$ such that 
	$$\begin{aligned}
	\min_{u\in\UU}\esup_{t\in[0,\infty)}  \max_{i=1,\ldots, p}g_{i}(x^{u,\bar{x}}(t),u(t)) &= \esup_{t\in[0,\infty)}  \max_{i=1,\ldots, p}g_{i}(x^{\bar{u},\bar{x}}(t),\bar{u}(t)) \\
	&=  \max_{i=1,\ldots, p}g_{i}(x^{(\bar{u},\bar{x})}(\bar{t}),\bar{u}(\bar{t})) = 0
	\end{aligned}
	$$
	where $\bar{u}$ has been possibly modified on a 0-measure set to satisfy the right-hand side equality.
	Then, choose $\bar{t}$ as the first time for which $\max_{i=1,\ldots, p}g_{i}(x^{(\bar{u},\bar{x})}(\bar{t}),\bar{u}(\bar{t})) = 0$ and an arbitrary $t_0 \in [0,\bar{t}[$. Setting $\nu(t)=\bar{u}(t_0 +t)$, since $t_0 < \bar{t}$, the point $\xi = x^{(\bar{u},\bar{x})}(t_0)$ satisfies $\max_{i=1,\ldots, p}g_{i}(\xi, \nu(0)) = \max_{i=1,\ldots, p}g_{i}(x^{(\bar{u},\bar{x})}(t_{0}),\bar{u}(t_{0})) < 0$, i.e. $\xi\in G_{-}$, and by a standard dynamic programming argument (since $x^{(\nu,\xi)}(t)=x^{(\bar{u},\bar{x})}(t_0+t)$ for all $t\geq 0$),
	$\ds \min_{u\in \UU} \esup_{t\in [0,\infty)}  \max_{i=1,\ldots, p}g_{i}(x^{(u,\xi)}(t), u(t_0 +t)) = 0$. 
	It follows that $\xi\in \DAM$ 
	and, therefore, the arc of integral curve between 0 and $\bar{t}$ starting from $\bar{x}\in \DAM$ is entirely contained in $\DAM$. 
		
	We now prove that this integral curve intersects $G_0$. Since $\bar{x}\in \DAM$, there exists an open set ${\mathcal O} \subset \RR^n$ such that $\bar{x} +\ee h \in \AC$ for all $h \in {\mathcal O}$ and $\Vert h\Vert \leq H$, with $H$ arbitrarily small, and all $\ee$ sufficiently small. Therefore, there exists $t_{\ee,h}$ such that $ \max_{i=1,\ldots, p}g_{i}(x^{(\bar{u},\bar{x}+\ee h)}(t),\bar{u}(t)) < 0$ for all $t< t_{\ee,h}$ and $\max_{i=1,\ldots, p}g_{i}(x^{(\bar{u},\bar{x}+\ee h)}(t_{\ee,h}),\bar{u}(t_{\ee,h})) \geq 0$.
	Taking an arbitrary $\sigma \in ]0, t_{\ee,h}[$ and setting $\xi_{\ee,h} \triangleq x^{(\bar{u},\bar{x}+\ee h)}(\sigma)$, we indeed have $\xi_{\ee,h} \in G_{-}$. Assume, by contradiction, that there exists $\tilde{u}\in \UU$ such that $\max_{i=1,\ldots,p}g_{i}(x^{(\tilde{u},\xi_{\ee,h})}(t),\tilde{u}(t)) < 0$ for all $t \in [\sigma, \sigma + \tau[$ for some sufficiently small $\tau >0$ and $\zeta \triangleq x^{(\tilde{u},\xi_{\ee,h})}(\sigma + \tau) \in \Int(\AAA)$, which indeed implies that $x^{(\tilde{u},\xi_{\ee,h})}(t) \in G_{-}$ for all $t\in  [\sigma, \sigma + \tau[$.
	As a consequence of (\ref{AAAopt}) and (\ref{DAopt}), there exists $v\in \UU$ such that 
	$\ds \esup_{t\in [0,\infty)}\max_{i=1,\ldots,p} g_{i}(x^{(v,\zeta)}(\tau + \sigma + t),v(\tau + \sigma + t)) < 0$. 
	Setting $\tilde{v}= \bar{u} \Join_{\tau} \tilde{u} \Join_{\tau+\sigma}v$, we easily verify that 
	$\ds \esup_{t\in [0,\infty)}\max_{i=1,\ldots, p} g_{i}(x^{(\tilde{v},\bar{x}+\ee h)}(t),\tilde{v}(t)) <0$, 
	which implies, again by (\ref{AAAopt}) and (\ref{DAopt}), that $\bar{x}+\ee h \in \Int(\AAA)$, the whole integral curve $x^{(\tilde{v},\bar{x}+\ee h)}$ remaining in $G_{-}$, hence contradicting the fact that $\bar{x}+\ee h\in \AC$. We thus conclude that no integral curve starting in $\AC$ can penetrate the interior of $\AAA$ before leaving $G_{-}$. 
	Note that along the same lines and taking the limit as $\ee, \Vert h\Vert \rightarrow 0$, we prove the same result for $\DAM$ (see Corollary \ref{bar-sem-cor}).\\
	Note that $x\in G_{-}$ is equivalent to $\Int(U(x)) \neq \emptyset$. Thus, because $\DAM \subset G_-$, we can conclude that $\bar{x}\in \DAM$ implies $x^{(\bar{u},\bar{x})}(t) \in \DAM$ for all $t$ such that 
	$\Int(U(x^{(\bar{u},\bar{x})}(t))) \neq \emptyset$. It results that 
$$ \bbar{t} \triangleq \sup \{ t\in [0,\infty) : x^{(\bar{u},\bar{x})}(t)\in \DAM\}  = \sup \{ t\in [0,\infty) : U(x^{(\bar{u},\bar{x})}(t)) \neq\emptyset,  x^{(\bar{u},\bar{x})}(t)\in \DAM \} $$
	Thus $\bbar{t}$ satisfies 
	$$\min_{v\in U} \max_{i=1,\ldots,p} g_{i}(x^{(\bar{u},\bar{x})}(\bbar{t}),v) = \max_{i=1,\ldots, p} g_{i}(x^{(\bar{u},\bar{x})}(\bbar{t}),\bar{u}(\bbar{t})) = 0, \quad x^{(\bar{u},\bar{x})}(\bbar{t}) \in \cl(\DAM)$$
	which proves that the arc of integral curve $x^{(\bar{u},\bar{x})}$ intersects $G_{0}$.
	\end{proof}

In the course of the proof of Proposition~\ref{boundary:prop}, we have proven the following result which is of interest by itself (\emph{semi-permeability}):
\begin{cor}\label{bar-sem-cor}
Assume (A1) to (A5) hold. Then from any point on the boundary $\DAM$, there cannot exist a trajectory penetrating the interior of $\AAA$ before leaving $G_{-}$.
\end{cor}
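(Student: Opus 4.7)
The plan is to argue by direct contradiction using the same concatenation trick already employed at the end of the proof of Proposition~\ref{boundary:prop}, but applied to $\bar{x}\in\DAM$ itself rather than to a perturbed point $\bar{x}+\ee h\in\AC$. Read at $\ee=0$, the step that previously produced $\bar{x}+\ee h\in\Int(\AAA)$ (contradicting $\bar{x}+\ee h\in\AC$) now produces $\bar{x}\in\Int(\AAA)$, which is inconsistent with $\bar{x}\in\DAM\subset\DA$; this is exactly the limiting reading $\ee,\|h\|\to 0$ alluded to in that proof.

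Concretely, I would suppose for contradiction that there exist $\tilde{u}\in\UU$ and $\tau>0$ with $\max_{i=1,\ldots,p} g_i(x^{(\tilde{u},\bar{x})}(t),\tilde{u}(t))<0$ for every $t\in[0,\tau]$ (encoding that the trajectory does not leave $G_-$ up to $\tau$) and $\zeta\triangleq x^{(\tilde{u},\bar{x})}(\tau)\in\Int(\AAA)$. Closedness of $\AAA$ (Corollary~\ref{close-cor}) yields $\Int(\AAA)=\AAA\setminus\DA$, so Proposition~\ref{min-sup-prop}~(i) applied at $\zeta$ furnishes a minimizer $v\in\UU$ with $\min_{u\in\UU}\esup_{s\geq 0}\max_i g_i(x^{(u,\zeta)}(s),u(s))\leq 0$, while item~(iii) excludes equality because $\zeta\notin\DA$; hence $\esup_{s\geq 0}\max_i g_i(x^{(v,\zeta)}(s),v(s))<0$ strictly. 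Forming the concatenated control $\tilde{v}\triangleq\tilde{u}\Join_{\tau} v(\cdot-\tau)\in\UU$, the Markovian property of \eqref{eq:state_space} makes $x^{(\tilde{v},\bar{x})}$ coincide with $x^{(\tilde{u},\bar{x})}$ on $[0,\tau]$ and with $x^{(v,\zeta)}(\cdot-\tau)$ on $[\tau,\infty)$; on each branch $\max_i g_i$ is strictly negative, so the overall essential supremum is also strictly negative. Proposition~\ref{min-sup-prop}~(iii) then delivers $\bar{x}\notin\DA$, the desired contradiction.

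The only point I foresee requiring any care is the preservation of the strict inequality across the join at $t=\tau$, but this is immediate: the essential supremum ignores the sole junction time at which $\tilde{v}$ may be discontinuous, and the two branches of $\max_i g_i$ are uniformly bounded above by strictly negative constants. No other real obstacle is anticipated; the whole argument is essentially a one-step distillation of the concatenation mechanism already present in the proof of Proposition~\ref{boundary:prop}.
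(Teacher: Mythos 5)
Your proposal is correct and follows essentially the same concatenation mechanism that the paper itself employs inside the proof of Proposition~\ref{boundary:prop}; the only difference is that the paper runs that argument for the perturbed points $\bar{x}+\ee h\in\AC$ and then disposes of the corollary with the one-line remark ``taking the limit as $\ee, \Vert h\Vert\to 0$,'' whereas you apply the same Proposition~\ref{min-sup-prop}(i),(iii) plus concatenation reasoning directly at $\ee=0$ to $\bar{x}\in\DAM$, which is cleaner and avoids the implicit limit-passing. One step worth a word of justification (though it is inherited verbatim from the paper's own argument, so it is not an objection specific to your proof): your claim that ``the two branches of $\max_i g_i$ are uniformly bounded above by strictly negative constants'' is not automatic for a merely measurable $\tilde u$ on $[0,\tau]$ --- pointwise $\max_i g_i(x^{(\tilde u,\bar x)}(t),\tilde u(t))<0$ for every $t\in[0,\tau]$ does not by itself force a strictly negative essential supremum, which is what the concatenation actually needs in order to conclude via \eqref{DAopt} that $\bar x\notin\DA$.
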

\subsection{Ultimate Tangentiality}\label{subsec:UltimateTangen}
We now characterise the intersection of $\DAM$ with $G_0$ at the point $z$ defined in Proposition~\ref{boundary:prop}. We define
\begin{equation}\label{gtilde}
\tg(x)\triangleq \min_{u\in U} \max_{i\in\{1,\dots,p\}}g_i(x,u).
\end{equation}
Comparing to \eqref{def:G0} we immediately see that $G_0$ is the set of points $x\in G$ such that $\tg(x)=0$.
We prove that $\tilde{g}$ is locally Lipschitz, a simplified version of a result of J. Danskin \cite{danskin}:
\begin{lem}\label{tildegLip}
The function $\tilde{g}$ is locally Lipschitz, and thus absolutely continuous and almost everywhere differentiable, on every open and bounded subset of $\RR^{n}$.
\end{lem}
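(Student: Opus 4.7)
The plan is to reduce the claim to a standard ``min-max of smooth functions is Lipschitz'' argument, exploiting only assumption (A4) (each $g_i$ is $C^2$ in $(x,u)$), finiteness of the index set $\{1,\ldots,p\}$, and compactness of $U$.

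First I would handle the inner $\max$. For any fixed bounded open set $V \subset \RR^n$, the closure $\cl(V)\times U$ is compact and sits inside $\RR^n\times U_1$, so by (A4) the partial derivatives $\partial g_i/\partial x$ are continuous and hence uniformly bounded on $\cl(V)\times U$, by some constant $L_V$. Consequently, for each $u\in U$, the map $x\mapsto g_i(x,u)$ is $L_V$-Lipschitz on $\cl(V)$, uniformly with respect to $u\in U$. Taking the pointwise maximum over the finite index set $\{1,\dots,p\}$ preserves this: the function
$$h(x,u)\triangleq \max_{i\in\{1,\ldots,p\}} g_i(x,u)$$
satisfies $|h(x_1,u)-h(x_2,u)|\leq L_V\|x_1-x_2\|$ for all $x_1,x_2\in V$ and all $u\in U$.

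Next I would pass to the outer $\min$. For each $x\in V$, continuity of $h(x,\cdot)$ on the compact set $U$ guarantees that the minimum in the definition of $\tg$ is attained; choose $u_x\in U$ with $\tg(x)=h(x,u_x)$. For arbitrary $x_1,x_2\in V$, I use $u_{x_2}$ as a suboptimal candidate at $x_1$:
$$\tg(x_1) \;\leq\; h(x_1,u_{x_2}) \;\leq\; h(x_2,u_{x_2}) + L_V\|x_1-x_2\| \;=\; \tg(x_2) + L_V\|x_1-x_2\|.$$
Swapping the roles of $x_1$ and $x_2$ yields $|\tg(x_1)-\tg(x_2)|\leq L_V\|x_1-x_2\|$, so $\tg$ is $L_V$-Lipschitz on $V$. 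Since $V$ was an arbitrary bounded open set, $\tg$ is locally Lipschitz on $\RR^n$.

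Finally, the two additional conclusions are standard consequences: a locally Lipschitz function is absolutely continuous on any compact segment contained in its domain (in particular along any line in $V$), and by Rademacher's theorem it is differentiable almost everywhere in $V$. There is no real obstacle here; the only point requiring care is to note that the uniformity of the Lipschitz constant with respect to $u\in U$ — which is what allows the ``suboptimal candidate'' trick in the min-step — follows cleanly from the joint $C^2$ regularity in (A4) together with compactness of $\cl(V)\times U$.
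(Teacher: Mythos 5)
Your proof is correct, and it is in fact cleaner than the one in the paper. The shared core is the classical ``suboptimal candidate'' trick for the outer minimum: bound $\tg(x_1)-\tg(x_2)$ by evaluating at the minimiser $u_{x_2}$ of the other point, then symmetrise. Where you diverge is in the treatment of the inner maximum. The paper introduces the sets $\mathcal{O}_j$ on which the max is achieved by the index $j$, selects an index $i_1$ and a control $u_1$ such that both $(x_1,u_1)$ and $(x_2,u_1)$ lie in the same $\mathcal{O}_{i_1}$, and then applies the mean value theorem to the single function $g_{i_1}(\cdot,u_1)$; this forces the restriction to ``$x_1$ and $x_2$ arbitrarily close'' and relies on a somewhat delicate (and not fully justified) claim that the maximising index can be taken stable under the perturbation from $x_1$ to $x_2$. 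You avoid this entirely by invoking the elementary inequality $\vert \max_i a_i - \max_i b_i\vert \le \max_i \vert a_i - b_i\vert$, so that $h(x,u)=\max_i g_i(x,u)$ is $L_V$-Lipschitz in $x$ uniformly in $u\in U$ directly from compactness of $\cl(V)\times U$ and the $C^2$ regularity in (A4); no bookkeeping of active indices is needed, and you obtain the Lipschitz estimate on an arbitrary bounded open set rather than only on $G_-$, which matches the statement of the lemma more faithfully. The concluding appeal to Rademacher's theorem is the same in both arguments.
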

\begin{proof}
Consider the family of subsets of $\bigcap_{i=1,\ldots, p} \cl(g_{i}^{-1}(]-\infty, 0]))$ defined by 
$$\mathcal{O}_j \triangleq \{(x,u) \in \bigcap_{i=1,\ldots, p} \cl(g_{i}^{-1}(]-\infty, 0])) :\max_{i = 1,\dots,p} g_i(x,u) = g_j (x,u)\}, \quad j= 1,\ldots,p.$$ 
It is clear that $\bigcup_{j=1,\dots,p} \mathcal{O}_j = \bigcap_{i=1,\ldots, p} \cl(g_{i}^{-1}(]-\infty, 0]))$ and that we can extract a minimal subfamily of $\{\mathcal{O}_j\}$ still covering $\bigcap_{i=1,\ldots, p} \cl(g_{i}^{-1}(]-\infty, 0]))$, where every $\mathcal{O}_j$ has non-empty interior. In the sequel we only consider this subfamily. 
Given $x_1$ and $x_2$ in $G_{-}$ arbitrarily close, there exists $i_1$ such that $(x_1, u_1)\in \mathcal{O}_{i_{1}}$ with $u_1$ such that $g_{i_{1}}(x_1,u_1) = \min_{u\in U} g_{i_{1}}(x_1,u)$, and such that $(x_2, u_1)\in \mathcal{O}_{i_{1}}$.
Thus, we get
\begin{equation}
\begin{aligned}
\tilde{g}(x_2) - \tilde{g}(x_1) & =  \min_{u\in U} \max_{i\in\{1,\dots,p\}}g_i(x_2,u) - \min_{u\in U} \max_{i\in\{1,\dots,p\}}g_i(x_1,u)  \\
& \leq  \max_{i\in\{1,\dots,p\}}g_i(x_2,u_1) -\max_{i\in\{1,\dots,p\}}g_i(x_1,u_1) \\
&\leq \max_{i\in\{1,\dots,p\}}g_i(x_2,u_1) - g_{i_{1}}(x_1,u_1)\\
& \leq  g_{i_{1}}(x_2,u_1) - g_{i_{1}}(x_1,u_1)\label{BVr}
\end{aligned}
\end{equation}

Thus, since $g$ is continuously differentiable in $x$ for all $u$, there exists a point $\xi_1$ such that  $g_{i_{1}}(x_2,u_1) -g_{i_{1}}(x_1,u_1) =D_x g_{i_{1}}(\xi_1, u_1) \left( x_2 - x_1 \right)$.

Similarly, there exists $i_{2}$ such that $(x_2,u_2)\in \mathcal{O}_{i_{2}}$ with  $g_{i_{2}}(x_2,u_2) = \min_{u\in U} g_{i_{2}}(x_2,u)$ and $(x_{1},u_{2})\in \mathcal{O}_{i_{2}}$. We get
\begin{equation}\label{BVl}
g_{i_{2}}(x_2,u_2) -g_{i_{2}}(x_1,u_2) \leq \tilde{g}(x_2) - \tilde{g}(x_1)
\end{equation}
Again, there exists a point $\xi_2$ such that  $g_{i_{2}}(x_2,u_2) -g_{i_{2}}(x_1,u_2) = D_x g_{i_{2}}(\xi_2, u_2) \left( x_2 - x_1 \right)$. Combining \eqref{BVr} and \eqref{BVl}
yields
$$\vert \tilde{g}(x_2) - \tilde{g}(x_1) \vert \leq C \Vert x_2 - x_1\Vert$$
with $C = \sup(\Vert D_x g_{i_{1}}(\xi_1, u_1) \Vert, \Vert D_x g_{i_{2}}(\xi_2, u_2) \Vert)$. It results that $\tilde{g}$ is locally Lipschitz. The absolute continuity and almost everywhere differentiability follow from Rademacher's theorem (see e.g. \cite[Theorem 3.1]{heinonen}. See also \cite{Clarke,Clarke_et_al_springer}), which achieves to prove the lemma.
\end{proof}

We summarise a few concepts from nonsmooth analysis \cite{Clarke_et_al_springer}  that will be used in the next proposition. Consider $h: X \rightarrow \mathbb{R}$, where $X$ is a finite dimensional vector space, and $h$ is Lipschitz with Lipschitz constant $K$ near a given point $x \in X$. The \emph{generalised directional derivative} of $h$ at $x$ in the direction $v$ is defined as follows:
\begin{equation}\label{eq:GenDer}
h^0(x;v) \triangleq \limsup_{y\rightarrow x,t\rightarrow 0 ^+} \frac{h(y + tv) - h(y)}{t}
\end{equation}

We also need to introduce the \emph{generalised gradient} of $h$ at $x$, labeled $\partial h(x)$. It is well-known that in our setting, where we consider a Lipschitz function $h:\mathbb{R}^n \rightarrow \mathbb{R}$, the generalised gradient is the compact and convex set:
\begin{equation}\label{eq:GerGradient}
\partial h(x) = \co\{\lim_{i\rightarrow \infty} Dh^T(x_i): x_i \rightarrow x, x_i \notin \Omega_1 \cup \Omega_2 \}
\end{equation}
where $Dh^T(x)$ denotes the transpose of the row vector $Dh(x)$ at $x$, $\Omega_1$ is a zero measure set where $h$ is nondifferentiable (recall that $h$ is differentiable almost everywhere), $\Omega_2$ is any zero-measure set and recall that $\co(S)$ denotes the closed and convex hull of an arbitrary set $S$. 
Equivalently, denoting by $B_{\ee}(x)$ the open ball of radius $\ee$ centered at $x$, we have:
$$
\partial h(x) =\bigcap_{\ee>0} \bigcap_{\meas(\Omega)=0} \co\left( Dh^T(B_{\ee}(x)\setminus\Omega)\right)
$$

The relationship between the generalised directional derivative and the generalised gradient is given by:
\begin{equation}\label{eq:MaxEquation}
h^0(x;v) = \max_{\xi\in\partial h(x)}  \xi^T v .
\end{equation}

\begin{pr}\label{ult-tan-1d-pr}
Assume (A1) to (A5) hold. Consider $\bar{x} \in \DAM$ and $\bar{u}\in \UU$ as in Proposition~\ref{boundary:prop}, i.e. such that the integral curve $x^{(\bar{u},\bar{x})}(t) \in \DAM$ for all $t$ in some time interval until it reaches $G_{0}$ at some finite time $\bar{t}\geq 0$. Then, the point $z= x^{(\bar{u},\bar{x})}(\bar{t})\in \cl(\DAM)\cap G_{0}$,  satisfies
\begin{equation}\label{nonsmoothUltimateTan}
0= \max_{\xi\in\partial\tilde{g}(z)} \xi^T f(z, \bar{u}(\bar{t})) = \min_{v\in U(z)} \max_{\xi\in\partial\tilde{g}(z)}  \xi^T  f(z, v) = \max_{\xi\in\partial\tilde{g}(z)} \min_{v\in U(z)}  \xi^T  f(z, v).
\end{equation}
Moreover, if the function $\tilde{g}$ is differentiable at the point $z$, then condition \eqref{nonsmoothUltimateTan} reduces to the smooth counterpart:
\begin{equation}\label{eq:smoothUltimateTan}
0 = L_{f}\tilde{g}(z,\bar{u}(\bar{t})) = \min_{u\in U(z)} L_{f}\tilde{g}(z,u)
\end{equation}
where $L_f\tilde{g}(x,u) \triangleq D\tilde{g}(x)f(x,u)$ is the Lie derivative of $\tilde{g}$ along the vector field $f$ at $(x,u)$.
\end{pr}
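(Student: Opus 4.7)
The plan is to exploit the function $\phi(t) \triangleq \tilde{g}(x^{(\bar{u},\bar{x})}(t))$, which is absolutely continuous by Lemma~\ref{tildegLip}. First I would note that the proof of Proposition~\ref{boundary:prop} in fact produces a $\bar{u}$ that keeps the \emph{entire} trajectory admissible on $[0,\infty)$, so $\phi(t) \leq \max_{i}g_{i}(x(t),\bar{u}(t)) \leq 0$ for all $t \geq 0$, while $\phi(\bar{t}) = \tilde{g}(z) = 0$. Hence $\bar{t}$ is an interior global maximum of $\phi$, and I would assume $\bar{t}$ is an $L$-point of $\bar{u}$ so that $\dot{x}(\bar{t}^{\pm}) = f(z,\bar{u}(\bar{t}))$ with no jump.

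For $\max_{\xi \in \partial\tilde{g}(z)} \xi^{T} f(z,\bar{u}(\bar{t})) = 0$: the interior-max property forces the upper-left Dini derivative of $\phi$ at $\bar{t}$ to be $\geq 0$ and the upper-right Dini derivative to be $\leq 0$. A Lipschitz estimate using $x(\bar{t}\pm h) = z \pm hf(z,\bar{u}(\bar{t})) + o(h)$ rewrites these two bounds as one-sided directional derivatives of $\tilde{g}$ at $z$ in the directions $\pm f(z,\bar{u}(\bar{t}))$. To convert these to the Clarke directional derivative $\tilde{g}^{0}(z;\cdot) = \max_{\xi\in\partial\tilde{g}(z)} \xi^{T}\cdot$ from~\eqref{eq:MaxEquation}, I would invoke Clarke regularity of $\tilde{g}$ at $z$ along these directions, extracted from the $\min\max$ form~\eqref{gtilde} via a Danskin-type envelope argument using the KKT-regularity hypothesis (A4). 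Combining both sign bounds then yields equality to $0$.

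For $\min_{v \in U(z)} \max_{\xi} \xi^{T} f(z,v) = 0$: the $\leq 0$ direction is immediate by taking $v = \bar{u}(\bar{t}) \in U(z)$ and applying the first equality. For the reverse, I would argue by contradiction using semi-permeability: if some $v^{*} \in U(z)$ satisfied $\max_{\xi} \xi^{T} f(z,v^{*}) < -c < 0$, then picking a sequence $z_{k} \in \DAM$ with $z_{k} \to z$ (which exists because $z \in \cl(\DAM)$), upper semicontinuity of the Clarke directional derivative ensures that for $k$ large the trajectory from $z_{k}$ with constant control $v^{*}$ strictly decreases $\tilde{g}$ on a fixed interval, entering $\Int(G_{-})$ definitely; splicing with an admissible tail produces an admissible trajectory from $z_{k}$ separating from $G_{0}$, hence entering $\Int(\AAA)$ and contradicting $z_{k} \in \DA$ via Corollary~\ref{bar-sem-cor}. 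Finally the third equality $\max_{\xi}\min_{v} = \min_{v}\max_{\xi}$ follows from Sion's minimax theorem applied to $(\xi,v) \mapsto \xi^{T} f(z,v)$ on the compact convex sets $\partial\tilde{g}(z)$ and $U(z)$, using linearity in $\xi$ and the convexity of the vectogram (A3) to handle $v$; the smooth case~\eqref{eq:smoothUltimateTan} is then immediate since $\partial\tilde{g}(z) = \{D\tilde{g}(z)^{T}\}$.

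The main obstacle is the upper bound $\max_{\xi} \xi^{T} f(z,\bar{u}(\bar{t})) \leq 0$: the naive viability reasoning only controls the \emph{one-sided} upper directional derivative at $z$, which can be strictly smaller than the Clarke directional derivative, and closing this gap is precisely where the Clarke regularity of $\tilde{g}$ at $z$ (via its $\min\max$ structure and the linear-independence condition (A4)) enters the proof; everything else is either a direct consequence of the interior-maximum observation or a fairly standard semi-permeability-plus-minimax argument.
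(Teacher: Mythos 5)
Your proposal has two genuine gaps, the first of which is fatal. You correctly identify that the interior-maximum/Dini-derivative argument only controls one-sided directional derivatives of $\tilde{g}$ at $z$, and that to upgrade these to the Clarke directional derivative $\tilde{g}^{0}(z;\cdot) = \max_{\xi\in\partial\tilde{g}(z)}\xi^{T}(\cdot)$ you need Clarke regularity of $\tilde{g}$ at $z$. But $\tilde{g}$ is a \emph{minimum} over $u$ of max-functions, and pointwise minima of Clarke-regular functions are generally \emph{not} Clarke regular. The paper's own Example~\ref{subsec:ConstrainedSpring2} exhibits exactly this failure: there $\tilde{g}(x)=x_{2}^{2}-|x_{2}|$, which at $x_{2}=0$ has one-sided derivative $-|d|$ in direction $(0,d)$ but Clarke directional derivative $+|d|$, so regularity fails and the two notions strictly disagree. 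No Danskin-type envelope theorem under (A4) can repair this, because (A4) is a constraint-qualification at fixed $(x,u)$ and does not prevent ties in the minimising $u$ or in the inner $\max_{i}$, which are precisely what destroy regularity. Consequently your proposed route to $\max_{\xi}\xi^{T}f(z,\bar{u}(\bar{t}))\leq 0$ collapses. The paper's proof does not use Clarke regularity at all: it perturbs the \emph{initial condition} into $\AC$ and applies a needle perturbation of the control, so that the perturbed trajectory is forced to hit $G_{0}$ at a nearby time $t_{\ee,\kappa,h}$; the first-order variation formula then yields a nonnegativity constraint at a.e.\ differentiability point near $z$, and since one may vary the perturbation direction $h$ and parameter $\ee$ freely, the accumulation points of $D\tilde{g}$ along these sequences sweep the generators of $\partial\tilde{g}(z)$, giving~\eqref{genDerOfGTilde2} for \emph{all} $\xi\in\partial\tilde{g}(z)$ without any regularity hypothesis. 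That perturbation-of-$x_0$-into-$\AC$ mechanism is the core idea you are missing.

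The second gap is in your semi-permeability contradiction for $\min_{v}\max_{\xi}\xi^{T}f(z,v)\geq 0$. You argue that if some $v^{*}\in U(z)$ gives a strictly negative Clarke derivative, trajectories from nearby $z_{k}\in\DAM$ with constant control $v^{*}$ push $\tilde{g}$ strictly negative and then ``entering $\Int(\AAA)$'' contradicts Corollary~\ref{bar-sem-cor}. But $\tilde{g}<0$ only means the trajectory remains in $G_{-}$, which $\DAM$ points already do; it does not place the trajectory in $\Int(\AAA)$, which is what Corollary~\ref{bar-sem-cor} requires for a contradiction. Moreover $v^{*}\in U(z)$ may saturate a constraint at $z$, so $v^{*}\notin U(z_{k})$ in general, and you would need a mechanism like Lemma~\ref{lemma:convergenceVtau} to build a nearby admissible control. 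In the paper this entire issue is avoided: the equality $\max_{\xi}\xi^{T}f(z,\bar{u}(\bar{t}))=\min_{v}\max_{\xi}\xi^{T}f(z,v)$ falls out directly from~\eqref{genDerOfGTildeLim}, and then the value $0$ is obtained by decomposing the exact zero $\tilde{g}(x^{(u_{\kappa,\ee},x_0+\ee h)}(t_{\ee,\kappa,h}))-\tilde{g}(z)$ into a control-perturbation piece (already known $\geq 0$) and a time-shift piece, which bounds $\tilde{g}^{0}(z;f(z,\bar{u}(\bar{t}_{-})))$ from above; the lower bound comes from monotonicity of $t\mapsto\tilde{g}(x^{(\bar{u},x_0)}(t))$ on $(\bar{t}-\eta,\bar{t}]$, which is essentially the left-Dini piece of your observation. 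Your use of Sion's minimax theorem for the third equality is equivalent to the paper's use of von Neumann's theorem and is fine, and the reduction to the smooth case is trivial once the rest is in place.
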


\begin{proof}
Let $x_0\in \DAM$, then there exists a $\bar{u}\in \UU$ such that $\tilde{g}(x^{(\bar{u},x_0)}(t)) < 0$ until $x^{(\bar{u},x_0)}$ intersects $G_0$ at some $\tilde{t}$. As in the proof of Proposition~\ref{boundary:prop}, we consider an open set ${\mathcal O} \subset \RR^n$ such that $x_0 +\ee h \in \AC$ for all $h \in {\mathcal O}$ and $\Vert h\Vert \leq H$, with $H$ arbitrarily small, and all $\ee$ sufficiently small.

Introduce a needle perturbation of $\bar{u}$, labeled $u_{\kappa,\ee}$, at some Lebesgue point $\tau$ of $\bar{u}$ before $x^{(\bar{u},x_0)}$ intersects $G_0$, in the spirit of \cite{DeDona_siam}, i.e. a variation $u_{\kappa,\ee}$ of $\bar{u}$, parameterized by the vector 
$$\kappa \triangleq (v,\tau,l) \in U(x^{(\bar{u},x_0 + \ee h)}(\tau-l\ee)) \times [0,T] \times [0,L]$$ with bounded $T,L$, 
of the form
\begin{equation}\label{needleu-eq}
u_{\kappa,\ee} \triangleq 
\bar{u} \Join_{(\tau-l\ee)} v \Join_{\tau} \bar{u}
=
\left\{ \begin{array}{lcl}
v&\mbox{\textrm{on}}& [\tau-l\ee, \tau[\\
\bar{u}&\mbox{\textrm{elsewhere on}}&[0,T]
\end{array}\right. 
\end{equation}
where $v$ stands for the constant control equal to $v \in U(x^{(\bar{u},x_0)}(\tau))$ for all $t\in [\tau-l\ee, \tau[$. 
Remark that, by definition of $G_-$ and $U(x)$, since $x^{(\bar{u},x_0)}(t) \in G_{-}$ for all $t < \tilde{t}$, we have $\bar{u}(t)\in U(x^{(\bar{u},x_0 )}(t))$ for all $t < \tilde{t}$ and thus $U(x^{(\bar{u},x_0 )}(t)) \not = \emptyset$ for all $t < \tilde{t}$. 

Because $x_0 + \ee h \in \AC$, $\exists t_{\ee,\kappa,h}<\infty$ at which  $x^{(u_{\kappa,\ee},x_0+\ee h)}(t_{\ee,\kappa,h})$ crosses $G_0$, see Proposition \ref{boundary:prop}. As a result of the uniform convergence of $x^{(u_{\kappa,\ee},x_0+\ee h)}$ to $x^{(\bar{u},x_0)}$, there exists a $\bar{t} \geq \tilde{t}$, s.t. $x^{(u_{\kappa,\ee},x_0+\ee h)}(t_{\ee,\kappa,h})\rightarrow x^{(\bar{u},x_0)}(\bar{t})$ as $\ee \rightarrow 0$ and, according to the continuity of $\tilde{g}$, we have
	$$
	\lim_{\ee \rightarrow 0} \tilde{g}(x^{(u_{\kappa,\ee},x_0+\ee h)}(t_{\ee,\kappa,h})) = 0 = \tilde{g}(x^{(\bar{u},x_0)}(\bar{t})).
	$$

Because $\tilde{g}(x^{(u_{\kappa,\ee},x_0+\ee h)}(t_{\ee,\kappa,h})) = 0$ and $\tilde{g}(x^{(\bar{u},x_0)}(t_{\ee,\kappa,h})) \leq 0$ (recall that $\tilde{g}(x^{(\bar{u},x_0)}(t_{\ee,\kappa,h})) \leq g(x^{(\bar{u},x_0)}(t_{\ee,\kappa,h}), \bar{u}(t_{\ee,\kappa,h})) \leq 0$ since the pair $(x^{(\bar{u},x_0)}(t),\bar{u}(t))$ satisfies the constraints for all $t$), we have that
$$
\tilde{g}(x^{(u_{\kappa,\ee},x_0+\ee h)}(t_{\ee,\kappa,h})) - \tilde{g}(x^{(\bar{u},x_0)}(t_{\ee,\kappa,h})) \geq 0.
$$

Recall from \cite{PBGM} as well as \cite{DeDona_siam} that
$$
x^{(u_{\kappa,\ee},x_0+\ee h)}(t_{\ee,\kappa,h}) = x^{(\bar{u},x_0)}(t_{\ee,\kappa,h}) + \ee w(t_{\ee,\kappa,h},\kappa,h) + O(\ee^2)
$$
where
$$
w(t,\kappa,h) \triangleq  \Phi^{\bar{u}}(t,0)h + l \Phi^{\bar{u}}(t,\tau) \left( f(x^{(\bar{u},x_0)}(\tau), v) - f(x^{(\bar{u},x_0)}(\tau), \bar{u}(\tau)) \right), 
$$ 
$\Phi^{\bar{u}}(t,s)$ being the solution to the variational equation at time $t$ starting from time $s$ (see equation \eqref{eq:VariationalEquation} in Appendix \ref{Appendix:PMP}), $\tau$ being any Lebesgue point of the control $\bar{u}$, with $v \in U(x^{(\bar{u},x_0)}(\tau))$ and where we have denoted by $O(\ee^{k})$  a continuous function of $\ee^{k}$ defined in a small open interval containing 0 and such that $\lim_{\ee\rightarrow 0} \frac{O(\ee^{k})}{\ee^{k-r}} = \lim_{\ee\rightarrow 0} O(\ee^{r}) = 0$ for all $0\leq r \leq k-1$, $k, r \in \NN$. 

Since $\tilde{g}$ is almost everywhere differentiable, we have:
\begin{equation}\label{genDerOfGTilde}
\frac{\tilde{g}(x^{(u_{\kappa,\ee},x_0+\ee h)}(t_{\ee,\kappa,h})) - \tilde{g}(x^{(\bar{u},x_0)}(t_{\ee,\kappa,h}))}{\ee} =D \tilde{g}(x^{(\bar{u},x_0)}(t_{\ee,\kappa,h})) . w(t_{\ee,\kappa,h},\kappa,h)) +O(\ee) \geq 0
\end{equation}
for every $v \in U(x^{(\bar{u},x_0)}(\tau))$ and almost every $\ee$ and $h$.

If we take any accumulation point of the right-hand side of \eqref{genDerOfGTilde} as $\ee$ and $\Vert h\Vert$ tend to zero, according to \eqref{eq:GenDer} and \eqref{eq:MaxEquation}, we get, after division by $l$:
\begin{equation}\label{genDerOfGTilde2}
 \xi^T\Phi^{\bar{u}}(\bar{t},\tau) \left( f(x^{(\bar{u},x_0)}(\tau), v) - f(x^{(\bar{u},x_0)}(\tau), \bar{u}(\tau)) \right) \geq 0 \quad \forall \xi\in \partial \tilde{g}(x^{(\bar{u},x_0)}(\bar{t}))
\end{equation}
Assume for a moment that we can replace $v$ in \eqref{genDerOfGTilde2} by a continuous family $v_{\tau}$ with respect to $\tau$ such that $\lim_{\tau \rightarrow \bar{t}} v_{\tau} = v$. This result is proven in Lemma \ref{lemma:convergenceVtau} below. Thus, taking the limit as $\tau$ tends to $\bar{t}$ in \eqref{genDerOfGTilde2}, we get
\begin{equation}\label{genDerOfGTildeLim}
\xi^T\left( f(z, v) - f(z, \bar{u}(\bar{t}) \right) \geq 0, \quad \forall \xi\in\partial\tilde{g}(z), \quad  \forall v\in U(z)
\end{equation}
where $z = x^{(\bar{u},x_{0})}(\bar{t})$. Therefore,
\begin{equation}\label{maxminBarU}
\max_{\xi\in\partial\tilde{g}(z)} \xi^Tf(z, \bar{u}(\bar{t})) = \min_{v\in U(z)} \max_{\xi\in\partial\tilde{g}(z)}  \xi^Tf(z, v).
 \end{equation}
Since the mapping $\xi\mapsto \xi^Tf(z,v)$ is linear on the compact and convex set $\partial\tilde{g}(z)$ and the mapping $v\mapsto \xi^Tf(z,v)$ is convex and continuous on the compact set $U(z)$ which is convex by (A.5), it results from the minimax theorem of Von Neumann (see e.g. \cite{berge}) that 
\begin{equation}\label{minimax}
 \min_{v\in U(z)} \max_{\xi\in\partial\tilde{g}(z)}  \xi^Tf(z, v) =  \max_{\xi\in\partial\tilde{g}(z)} \min_{v\in U(z)} \xi^Tf(z, v).
 \end{equation}

If $\bar{t}$ is not an L-point, it suffices to modify $\bar{u}$ on the 0-measure set $\{\bar{t}\}$ by replacing $\bar{u}(\bar{t})$ by its left limit $\bar{u}(\bar{t}_{-})$ in the latter expression.

We will now show that this expression is equal to 0. 
On the one hand, because $\tilde{g}$ is locally Lipschitz, $D\tilde{g}$ exists almost everywhere and the mapping $t\mapsto \tilde{g}(x^{(\bar{u},x_{0})}(t))$ is nondecreasing on some small interval $(\bar{t} - \eta, \bar{t}]$ with $\eta>0$ sufficiently small, and we have $D\tilde{g}(x^{(\bar{u},x_{0})}(t)) . f(x^{(\bar{u},x_{0})}(t),\bar{u}(\bar{t})) \geq 0$ where $D\tilde{g}$ exists.
Therefore we conclude that 
\begin{equation}\label{tildeg0geq0}
\tilde{g}^0(z;f(z,\bar{u}(\bar{t}_{-}))) \geq 0.
\end{equation}

On the other hand, by definition, we have:
$$\begin{aligned}
0=&\frac{\tilde{g}(x^{(u_{\kappa,\ee},x_0+\ee h)}(t_{\ee,\kappa,h})) - \tilde{g}(x^{(\bar{u},x_{0})}(\bar{t}))}{\ee}\\
=&\left[ \frac{\tilde{g}(x^{(u_{\kappa,\ee},x_0+\ee h)}(t_{\ee,\kappa,h})) - \tilde{g}(x^{(\bar{u},x_{0})}(t_{\ee,\kappa,h}))}{\ee} \right] \ +\left[ \frac{\tilde{g}(x^{(\bar{u},x_{0})}(t_{\ee,\kappa,h})) - \tilde{g}(x^{(\bar{u},x_{0})}(\bar{t}))}{\ee}\right]
\end{aligned}
$$
Thus, since the first bracketed term of the right-hand side has been proven to be $\geq 0$, we immediately get
$$
\limsup_{\ee\rightarrow 0_{+}} \frac{\tilde{g}(x^{(\bar{u},x_{0})}(\bar{t}))  - \tilde{g}(x^{(\bar{u},x_{0})}(t_{\ee,\kappa,h}))}{\ee} = \limsup_{\ee\rightarrow 0_{+}} \frac{\tilde{g}(x^{(u_{\kappa,\ee},x_0+\ee h)}(t_{\ee,\kappa,h})) - \tilde{g}(x^{(\bar{u},x_{0})}(t_{\ee,\kappa,h}))}{\ee} \geq 0
$$
But, since 
$$
 \limsup_{\ee\rightarrow 0_{+}} \frac{\tilde{g}(x^{(\bar{u},x_{0})}(\bar{t}))  - \tilde{g}(x^{(\bar{u},x_{0})}(t_{\ee,\kappa,h}))}{\ee} = -\tilde{g}^0(z;f(z,\bar{u}(\bar{t}_{-})))
$$
we conclude that $-\tilde{g}^0(z;f(z,\bar{u}(\bar{t}_{-})))\geq 0$. Comparing to \eqref{tildeg0geq0}, we get $\tilde{g}^0(z;f(z,\bar{u}(\bar{t}_{-})))= 0$, or according to \eqref{eq:MaxEquation}:
$$
0 = \max_{ \xi\in\partial\tilde{g}(z)} \xi^Tf(z, \bar{u}(\bar{t})) 
$$
which, together with \eqref{maxminBarU} and \eqref{minimax}, proves \eqref{nonsmoothUltimateTan}.

If $\tilde{g}$ is differentiable at $z$, we can apply exactly the same argument as before up until equation \eqref{genDerOfGTilde}. Thus, letting $\Vert h\Vert \rightarrow 0$ and dividing by $l$, we get:
$$D\tilde{g}(x^{(\bar{u},x_0)}(t_{\ee,\kappa,h})) . \left[ \Phi^{\bar{u}}(t_{\ee,\kappa,h},\tau) \left( f(x^{(\bar{u},\bar{x})}(\tau), v) - f(x^{(\bar{u},\bar{x})}(\tau), \bar{u}(\tau)) \right)\right] + O(\ee) \geq 0.$$
If $\ee$ now tends to zero we get
$$D \tilde{g}(z) \Phi^{\bar{u}}(\bar{t},\tau) f(x^{(\bar{u},\bar{x})}(\tau),v) \geq D\tilde{g}(z) \Phi^{\bar{u}}(\bar{t},\tau) f(x^{(\bar{u},\bar{x})}(\tau),\bar{u}(\tau)), \quad \forall v\in U(x^{(\bar{u},\bar{x})}(\tau)).$$
We again assume that $\bar{t}$ is an L-point for the control $\bar{u}$, and construct the same continuous mapping $\tau \mapsto v_{\tau}$ as before, such that $\lim_{\tau\rightarrow \bar{t}} v_{\tau} = v$, for an arbitrary $v\in U(z)$ to get:
$$D \tilde{g}(z) f(z,v) \geq D\tilde{g}(z) f(z,\bar{u}(\bar{t})), \quad \forall v\in U(z)$$
or, using the Lie derivative notation:
$$L_{f}\tilde{g}(z,\bar{u}(\bar{t})) = \min_{v\in U(z)} L_{f}\tilde{g}(z,v).$$

Interpreting $L_{f}\tilde{g}(z,\bar{u}(\bar{t}))$ as the time derivative of $t\mapsto \tilde{g}(x^{(\bar{u},x_{0})}(t))$ and remarking that the latter mapping is non decreasing on an interval $]\bar{t}-\eta, \bar{t}]$, for some $\eta > 0$ small enough, we indeed deduce that $L_{f}\tilde{g}(z,\bar{u}(\bar{t})) \geq 0$. The same mapping being non increasing on the interval $[\bar{t},\bar{t}+\eta'[$, we have $L_{f}\tilde{g}(z,\bar{u}(\bar{t})) \leq 0$, which finally proves that 
$L_{f}\tilde{g}(z,\bar{u}(\bar{t}))=0$. If $\bar{t}$ is not an L-point of $\bar{u}$, the same modification of $\bar{u}$ at $\bar{t}$, as in the nonsmooth case, may be applied, which achieves to prove the proposition.
\end{proof}

\begin{lem}\label{lemma:convergenceVtau}
Under the assumptions of Proposition~\ref{ult-tan-1d-pr}, for all $v\in U(z)$ with $z = x^{(\bar{u},x_0)}(\bar{t})$, there exists a continuous mapping $\tau \mapsto v_{\tau}$ from $[\bar{t} - \eta,\bar{t}[$ to $U$, with $\eta > 0$ small enough, such that $v_{\tau} \in U(x^{(\bar{u},x_0)}(\tau))$ for all $\tau \in [\bar{t} - \eta,\bar{t}[$ and $\ds \lim_{\tau \nearrow \bar{t}} v_{\tau} = v$.
\end{lem}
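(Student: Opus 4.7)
The plan is to construct $v_\tau$ explicitly as a first-order perturbation of $v$ in the form $v_\tau = v + s(\tau) d$, where the direction $d \in \RR^m$ is provided by the linear independence of active constraint gradients from (A4), and the continuous scalar $s(\tau) \to 0$ is tailored to the slack along the trajectory.

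First, I would identify the active constraint indices $\II(z,v)$ and $\JJ(v)$ at the pair $(z,v)$. By (A4), the rows $\{\partial g_i/\partial u(z,v) : i\in\II(z,v)\}$ together with $\{\partial\gamma_j/\partial u(v) : j\in\JJ(v)\}$ are linearly independent, so the underdetermined linear system
$$\frac{\partial g_i}{\partial u}(z,v)\, d = -1 \ \ (i\in\II(z,v)), \qquad \frac{\partial \gamma_j}{\partial u}(v)\, d = -1 \ \ (j\in\JJ(v))$$
admits some $d \in \RR^m$ by the Rouché--Capelli theorem. Such a $d$ strictly pushes every currently active constraint into the feasible interior.

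Next I would introduce the continuous slack function $\delta(\tau) \triangleq \max_{i\in\II(z,v)} \max\{0, g_i(x^{(\bar{u},x_0)}(\tau), v)\}$, which vanishes at $\bar{t}$ and is continuous in $\tau$ since the trajectory $\tau \mapsto x^{(\bar{u},x_0)}(\tau)$ is absolutely continuous. Setting $s(\tau) \triangleq K(\delta(\tau) + (\bar{t} - \tau))$ for a constant $K > 1$ and $v_\tau \triangleq v + s(\tau) d$, both the continuity of $\tau \mapsto v_\tau$ and the limit $v_\tau \to v$ as $\tau \nearrow \bar{t}$ are immediate. The feasibility check then splits into cases. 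For constraints inactive at $(z,v)$, strict inequality persists on a small interval around $\bar{t}$ by continuity of $g$ and $\gamma$. For active input constraints $j\in\JJ(v)$, a second-order Taylor expansion gives $\gamma_j(v_\tau) = -s(\tau) + O(s(\tau)^2) \leq 0$. For active mixed constraints $i\in\II(z,v)$, the analogous expansion in $u$ at $v$ yields
$$g_i(x^{(\bar{u},x_0)}(\tau), v_\tau) \leq \delta(\tau) - s(\tau)(1 + o(1)) + O(s(\tau)^2),$$
which is $\leq 0$ for $K>1$ and $\eta$ small enough, since $s(\tau) \geq K\delta(\tau)$.

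The main obstacle I anticipate is coordinating feasibility with continuity: the target set $U(x^{(\bar{u},x_0)}(\tau))$ varies with $\tau$, and $v$ may sit on the boundary of $U(z)$ with no slack to spare at the limit, so naive candidates (such as $v_\tau \equiv v$ or an orthogonal projection) need not even be well-defined continuously. Building the slack $\delta(\tau)$ into the scaling $s(\tau)$ is what resolves this, since the perturbation is then large enough to restore feasibility yet small enough to vanish at $\bar{t}$. A minor concern---that $\bar{u}$ is only measurable---is immaterial here because $\delta$ depends on $\bar{u}$ only through the absolutely continuous trajectory $x^{(\bar{u},x_0)}$.
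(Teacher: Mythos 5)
Your proof is correct and genuinely different from the paper's. The paper applies the implicit function theorem (justified by the linear independence in (A4)) to the map $\Gamma(x,u)$ stacking the active $g_i$ and $\gamma_j$, obtains a $C^1$ implicit function $\hat{u}$ solving $\Gamma=0$ near $(z,v)$, and defines $v_\tau$ by composing $\hat{u}$ with the trajectory $x^{(\bar{u},x_0)}(\tau)$; the active constraints are thus kept exactly on their zero level and the inactive ones strictly negative by continuity. You instead pick, via the same linear independence, a single direction $d$ that strictly decreases every active constraint to first order, and scale it by a slack-calibrated factor $s(\tau) = K(\delta(\tau) + (\bar{t}-\tau))$ to get $v_\tau = v + s(\tau)d$, pushing into the interior rather than staying on the active manifold. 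Your argument is more elementary (no IFT, only a solvable underdetermined linear system and a second-order Taylor bound), and it makes the dependence of $v_\tau$ on $\tau$ fully explicit; the paper's IFT route yields additional regularity (e.g. differentiability of $\tau\mapsto v_\tau$ wherever the trajectory is differentiable) that yours does not, though only continuity is needed for the lemma. One small clarification worth recording in your write-up: since $z\in G_0$ and $v\in U(z)$, we automatically have $\max_i g_i(z,v)=0$, so $\II(z,v)\ne\emptyset$ and your linear system is never vacuous; and convexity of the $\gamma_j$ only helps your $O(s(\tau)^2)$ sign argument, it is not required.
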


\begin{proof}
	Recall that the condition $v_{\tau} \in U(x^{(\bar{u},x_0)}(\tau))$ is equivalent to $g(x^{(\bar{u},x_0)}(\tau), v_{\tau}) \preceq 0$ for all $\tau\in [\bar{t} - \eta,\bar{t}[$ and, since $z= x^{(\bar{u},x_0)}(\bar{t})\in G_0$, $v\in U(z)$ is such that $g(z,v) \circeq 0$. We construct such a $v_{\tau}$ as follows.
	
	Since, by assumption, $\#\II(z,\bar{u}(\bar{t}_-)) = s_1$ and $\#\JJ(\bar{u}(\bar{t}_-)) = s_2$, with $\max(s_1,s_2)>0$, consider the equation 
	\[
	\Gamma(x,u) = \left(\begin{array}{c}
	g_{i_1}(x,u)\\
	\dots \\
	g_{i_{s_1}}(x,u)\\
	\gamma_{j_1}(u)\\
	\dots\\
	\gamma_{j_{s_2}}(u)
	\end{array}\right) = 0.
	\]	
	According to assumption (A4) and the implicit function theorem, there exists a continuously differentiable mapping:
	
	$$\hat{u} \triangleq (\hat{u}_1,\dots,\hat{u}_{s_1 + s_2}):\RR^n \times \RR^{m - (s_1 + s_2)} \rightarrow \RR^{s_1 + s_2}$$ 
	defined in a neighbourhood of the point $(z,v_{s_1+s_2+1},\dots,v_{m})$, labelled $\Nnn$, 
	such that	
	$$(\hat{u}(x,v_{s_1 + s_2 + 1},\dots,u_{m}),v_{s_1 + s_2 + 1},\dots,v_{m}) = v$$ 
	and
	$$\Gamma(x,\hat{u}(x,u_{s_1 + s_2 + 1},\dots,u_{m}),u_{s_1 + s_2 + 1},\dots,u_{m}) = 0 \quad \forall (x,u_{s_1 + s_2 +1},\dots,u_{m}) \in \Nnn.$$
	Then we define 
	$$v_{\tau} \triangleq \tilde{u}(x^{(\bar{u},x_0)}(\tau),v_{s_1 + s_2 + 1},\dots, v_m) \quad \forall \tau \in [\bar{t} - \eta,\bar{t}[$$
with $\eta$ small enough such that $(x^{(\bar{u},x_0)}(\tau),v_{s_1 + s_2 + 1},\dots, v_m)$ remains in $\Nnn$ in the whole interval $[\bar{t} - \eta,\bar{t}[$.
	Therefore, we have $\Gamma(x^{(\bar{u},x_0)}(\tau),v_{\tau}) = 0$ for all $\tau \in [\bar{t} - \eta,\bar{t}[$. Moreover, since $v_{\tau}$ so defined is clearly a continuous function of $\tau$, and since, by assumption (A.4), $\eta$ may be possibly decreased in order that  
	$$g_i (x^{(\bar{u},x_0)}(\tau),v_{\tau},v_{s_1+s_2 + 1},\dots, v_m) < 0  \quad  \forall \tau \in [\bar{t} - \eta,\bar{t}[, \quad \forall i\not\in \II(z,\bar{u}(\bar{t}_-))$$ and 
	$$\gamma_j (x^{(\bar{u},x_0)}(\tau),v_{\tau},v_{s_1+s_2 + 1},\dots, v_m) < 0 \quad \forall \tau \in [\bar{t} - \eta,\bar{t}[, \quad \forall j \not\in \JJ(\bar{u}(\bar{t}_-))$$ 
	we have, as required, $v_{\tau}\in U(x^{(\bar{u},x_0)}(\tau))$ and $\lim_{\tau\nearrow \bar{t}} v_{\tau} = v$.
\end{proof}

\section{The Barrier Equation}\label{sec:BarrierEquation}
~

We next present the main result of the paper, Theorem \ref{BarrierTheorem}, which gives necessary conditions satisfied by an integral curve running along the barrier. The proof of the theorem utilises the maximum principle for problems with mixed constraints stated in terms of reachable sets where the extremal curves are those whose endpoints at each time $t$ belong to the boundary of the reachable set at the same instant of time. See the Appendix \ref{Appendix:PMP} for more details.

\begin{thm}\label{BarrierTheorem}
Under the assumptions of Proposition \ref{boundary:prop}, consider an integral curve $x^{\bar{u}}$ on $\DAM \cap \cl(\Int(\AAA))$ and assume that the control function $\bar{u}$ is piecewise continuous. Then $\bar{u}$ and $x^{\bar{u}}$ satisfy the following necessary conditions.

There exists a non-zero absolutely continuous adjoint $\lambda^{\bar{u}}$ and piecewise continuous multipliers $\mu_i^{\bar{u}} \geq 0$, $i=1,\dots,p$, such that:
\begin{equation}\label{costateEquation}
\dot{\lambda}^{\bar{u}}(t) = -\left(\frac{\partial f}{\partial x}(x^{\bar{u}}(t),\bar{u}(t)) \right)^T \lambda^{\bar{u}}(t) - \sum_{i=1}^{p}\mu_i^{\bar{u}}(t)\frac{\partial g_i}{\partial x}(x^{\bar{u}}(t),\bar{u}(t))
\end{equation}
with the ``complementary slackness condition''
\begin{equation}\label{eq:complement}
\mu_i^{\bar{u}}(t)g_i(x^{\bar{u}}(t),\bar{u}(t)) = 0, \quad i=1,\ldots, p
\end{equation}
and final conditions
\begin{equation}\label{eq:finalConditions}
\lambda^{\bar{u}}(\bar{t})^T \in \arg \max_{\xi\in\partial \tilde{g}(z)} \xi. f(z,\bar{u}(\bar{t}))
\end{equation} 
where $z = x^{\bar{u}}(\bar{t})$ with $\bar{t}$ such that $z\in G_0$, i.e. $\min_{u\in U}\max_{i=1,\dots,p} g_i(z,u) = 0$, $\partial \tilde{g}(z)$ being the generalised gradient of $\tilde{g}$ defined by \eqref{gtilde} at $z$.

Moreover, at almost every $t$, the Hamiltonian, $H(\lambda^{\bar{u}}(t),x^{\bar{u}}(t),u) = \left(\lambda^{\bar{u}}(t) \right)^Tf(x^{\bar{u}}(t),u)$, is minimised over the set $U(x^{\bar{u}}(t))$ and equal to zero:
\begin{equation}\label{HamiltonianMinimised}
\begin{aligned}
\min_{u\in U(x^{\bar{u}}(t))} \lambda^{\bar{u}}(t)^T f(x^{\bar{u}}(t),u) &=\min_{u\in U} \left[ \left(\lambda^{\bar{u}}(t) \right)^T f(x^{\bar{u}}(t),u) + \sum_{i=1}^{p} \mu_{i}^{\bar{u}}(t)g_{i}(x^{\bar{u}}(t),u)\right] \\
&= \lambda^{\bar{u}}(t)^T f(x^{\bar{u}}(t),\bar{u}(t)) = 0
\end{aligned}
\end{equation}

\end{thm}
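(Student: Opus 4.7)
The plan is to follow the duality-style argument of \cite{DeDona_siam}, adapted to the mixed-constraint setting using the maximum principle established in Appendix~\ref{Appendix:PMP}. Fix a barrier arc $x^{\bar{u}}$ with $x^{\bar{u}}(t)\in\DAM$ for $t\in[0,\bar{t})$ and $z=x^{\bar{u}}(\bar{t})\in G_0$. The first observation I would exploit is semi-permeability (Corollary~\ref{bar-sem-cor}): no admissible trajectory issued from $x^{\bar{u}}(0)$ can penetrate $\Int(\AAA)$ before leaving $G_-$. Consequently, for each $t\in[0,\bar{t}]$, the point $x^{\bar{u}}(t)$ lies on the boundary of the constrained reachable set from $x^{\bar{u}}(0)$ at time $t$, so that $x^{\bar{u}}$ qualifies as an extremal in the reachable-set sense required by Appendix~\ref{Appendix:PMP}. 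The fact that $x^{\bar{u}}$ lies in $\cl(\Int(\AAA))$ ensures that this boundary has a non-trivial interior on the admissible side, so the PMP hypotheses apply non-degenerately.

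Having identified the trajectory as extremal, the next step would be to invoke the mixed-constraint maximum principle. Because $\bar{u}$ is piecewise continuous and $x^{\bar{u}}$ is regular by (A4), the PMP yields a non-zero absolutely continuous adjoint $\lambda^{\bar{u}}$ together with piecewise continuous Karush--Kuhn--Tucker multipliers $\mu_i^{\bar{u}}\geq 0$, $i=1,\dots,p$. Introducing the augmented Hamiltonian $H(\lambda,x,u,\mu)=\lambda^T f(x,u)+\sum_i \mu_i g_i(x,u)$, the adjoint dynamics \eqref{costateEquation} arise as $\dot{\lambda}^{\bar{u}}=-\partial H/\partial x$, the complementary slackness \eqref{eq:complement} is the KKT condition associated with the mixed constraints, and the pointwise minimisation of $\lambda^T f$ over $U(x^{\bar{u}}(t))$ at almost every $t$ follows from the PMP combined with the convexity of $U$ and of $u\mapsto g_i(x,u)$ provided by (A5). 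The equivalence between minimising $\lambda^T f$ over $U(x^{\bar{u}}(t))$ and minimising the augmented $H$ over $U$ in \eqref{HamiltonianMinimised} is then the standard KKT duality translation.

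For the endpoint condition \eqref{eq:finalConditions} I would argue as follows. The PMP transversality gives that $\lambda^{\bar{u}}(\bar{t})^T$ is an outward normal to the reachable set at $z$; by semi-permeability this normal coincides, up to a positive scalar, with an outward normal to $\DAM$ at $z$. Proposition~\ref{ult-tan-1d-pr} characterises such normals at $z\in G_0$ as precisely those elements of the generalised gradient $\partial\tilde{g}(z)$ that attain the maximum of $\xi^T f(z,\bar{u}(\bar{t}))$, immediately yielding \eqref{eq:finalConditions}. To close with \eqref{HamiltonianMinimised}, I would use the classical fact that the minimised Hamiltonian of an autonomous system is constant along an extremal; evaluating this constant at $\bar{t}$ and applying the minimax identity
\[
\min_{v\in U(z)}\lambda^{\bar{u}}(\bar{t})^T f(z,v)=\max_{\xi\in\partial\tilde{g}(z)}\min_{v\in U(z)}\xi^T f(z,v)=0
\]
from Proposition~\ref{ult-tan-1d-pr} shows that this constant is zero along the entire barrier arc. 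The main obstacle is expected to be the rigorous matching of $\lambda^{\bar{u}}(\bar{t})$ with a specific element of $\partial\tilde{g}(z)$ at a non-smooth endpoint: there the reachable-set normal is not uniquely determined, and one has to exploit both semi-permeability and the nonsmooth minimax result of Proposition~\ref{ult-tan-1d-pr} to certify that the PMP transversality selects precisely a maximiser of $\xi^T f(z,\bar{u}(\bar{t}))$ over $\partial\tilde{g}(z)$.
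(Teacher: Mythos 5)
Your proposal follows essentially the same route as the paper's own proof: place the barrier arc on the boundary of the constrained reachable set (the paper isolates this step in Lemma~\ref{att-boundary:lemma}), apply the mixed-constraint maximum principle of Theorem~\ref{extrem:thm} (with the sign change $\lambda^{\bar{u}}=-\eta^{\bar{u}}$ turning the maximisation into the stated minimisation), use complementary slackness to pass from minimising the augmented Hamiltonian over $U$ to minimising $\lambda^{T}f$ over $U(x)$, and invoke Proposition~\ref{ult-tan-1d-pr} to pin the terminal adjoint in $\arg\max_{\xi\in\partial\tilde{g}(z)}\xi\cdot f(z,\bar{u}(\bar{t}))$ and to deduce that the (constant) minimised Hamiltonian vanishes. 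One small point worth tightening: semi-permeability alone only yields $R_t(\bar{x})\subset\cl(\AC)$, equivalently $\Int(\AAA)\subset R_t(\bar{x})^{\mathsf C}$; to conclude $x^{\bar{u}}(t)\in\partial R_t(\bar{x})$ you must also show $x^{\bar{u}}(t)\in\cl\bigl(R_t(\bar{x})^{\mathsf C}\bigr)$, and this is precisely where the hypothesis $\bar{x}\in\cl(\Int(\AAA))$ enters, via a sequence of admissible trajectories in $\Int(\AAA)$ converging uniformly to $x^{\bar{u}}$ on compacts --- this is the exact content of Lemma~\ref{att-boundary:lemma}, and is a cleaner justification than the heuristic ``non-degeneracy of the PMP'' you offer.
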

\begin{rem}

To compute \eqref{HamiltonianMinimised} the following necessary conditions are useful:

\begin{equation}\label{eq:HamiltonianKKT}
\left\{\begin{array}{l}
\ds H(\lambda^{\bar{u}}(t),x^{\bar{u}}(t),\bar{u}(t)) = 0\\
\ds \frac{\partial H}{\partial u}(\lambda^{\bar{u}}(t),x^{\bar{u}}(t),\bar{u}(t)) + \sum_{i=1}^{p}\mu_i^{\bar{u}}(t) \frac{\partial g_{i}}{\partial u}(x^{\bar{u}}(t),\bar{u}(t)) + \sum_{j=1}^{r}\nu_j^{\bar{u}}(t) \frac{\partial \gamma_{j}}{\partial u}(\bar{u}(t)) = 0 \\
\ds \mu_i^{\bar{u}}(t)g_i(x^{\bar{u}}(t),\bar{u}(t)) = 0, \quad \mu_i^{\bar{u}}(t) \geq 0 \quad i=1,\ldots, p \vspace{1em} \\
\ds \nu_j^{\bar{u}}(t)\gamma_j(\bar{u}(t)) = 0, \quad \nu_j^{\bar{u}}(t) \geq 0\quad j=1,\ldots, r.
\end{array}\right.
\end{equation}
\end{rem}

Before proving Theorem \ref{BarrierTheorem} we need to introduce the following definition: 
\begin{defn}\label{def:ConstrainedReachSet}
	The \emph{constrained reachable set} at time $t$ from initial condition $\bar{x}$ is given by:
	$$R_{t}(\bar{x}) \triangleq \{x\in \RR^n : \exists u \in \UU~ s.t.~x=x^{(u,\bar{x})}(t),~ g(x^{(u,\bar{x})}(s),u(s))\preceq 0~\mathrm{for~} \mathit{a.e.~} s\leq t \}$$
\end{defn}
\begin{lem}\label{att-boundary:lemma}
Let $\bar{x}\in\DAM \cap \cl(\Int(\AAA))$ and $\bar{u}\in \UU$ as in Proposition \ref{boundary:prop}, i.e. such that $x^{(\bar{u},\bar{x})}(t)\in\DAM$ for all $t\in[0,\bar{t}[$ where $\bar{t}$ is the time such that $\tilde{g}(x^{(\bar{u},\bar{x})}(\bar{t})) = 0$. Then, $x^{(\bar{u},\bar{x})}(t)\in \partial R_t(\bar{x})$ for all $0\leq t<\bar{t}$.
\end{lem}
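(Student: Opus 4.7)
The inclusion $x^{(\bar u,\bar x)}(t) \in R_t(\bar x)$ for every $t \in [0, \bar t)$ is immediate from Definition~\ref{def:ConstrainedReachSet}: by Proposition~\ref{boundary:prop} the control $\bar u$ itself satisfies $g(x^{(\bar u,\bar x)}(s), \bar u(s)) \preceq 0$ on $[0, t]$ and reaches $x^{(\bar u,\bar x)}(t)$. The whole content of the lemma is therefore to rule out $x^{(\bar u,\bar x)}(t) \in \Int R_t(\bar x)$, which I would establish by contradiction: suppose some $t^* \in [0, \bar t)$ yields $\delta > 0$ with $B_\delta(\xi) \subset R_{t^*}(\bar x)$, where $\xi := x^{(\bar u,\bar x)}(t^*)$. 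The strategy is to produce a whole open neighborhood of $\bar x$ contained in $\AAA$, contradicting $\bar x \in \DAM \subset \DA$.

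First, I would show $\xi \in \cl(\Int \AAA)$: any sequence $\bar x_k \in \Int \AAA$ converging to $\bar x$ produces, via the continuous dependence of trajectories on initial data (assumptions (A1)-(A2) and Lemma~\ref{compact-lem}) together with the Markovian concatenation argument of Proposition~\ref{boundary:prop}, a sequence in $\Int \AAA$ converging to $\xi$. Hence $B_\delta(\xi) \cap \Int \AAA$ contains some $y$. Being in $R_{t^*}(\bar x)$, the point $y$ is admissibly reachable from $\bar x$ by some $u_y \in \UU$; being in $\Int \AAA$, it admits $\rho > 0$ with $B_\rho(y) \subset \AAA$, so from each $w \in B_\rho(y)$ there exists an admissible continuation $v_w \in \UU$ defined on $[0,\infty)$. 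Using Lipschitz continuity of the flow with respect to the initial condition, I then choose $\eta > 0$ such that $y' := x^{(u_y, \bar x')}(t^*) \in B_\rho(y)$ for every $\bar x' \in B_\eta(\bar x)$. After a harmless strict-feasibility regularization of $u_y$—possible because $\xi \in G_-$ and (A5) permits a small convex mix with a strict interior control at $\xi$—the trajectory from any such $\bar x'$ under this modified control remains admissible on $[0, t^*]$, so concatenating $u_y \Join_{t^*} v_{y'}$ furnishes an admissible control on $[0, \infty)$ from every $\bar x' \in B_\eta(\bar x)$. Thus $B_\eta(\bar x) \subset \AAA$, whence $\bar x \in \Int \AAA$, the desired contradiction.

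The main obstacle is precisely the strict-feasibility regularization of $u_y$: a Lipschitz perturbation of the initial condition shifts constraint values by $O(\|\bar x' - \bar x\|)$, so one must replace $u_y$ by a control $\tilde u_y$ whose trajectory from $\bar x$ still ends in $B_\rho(y) \subset \Int \AAA$ but has $\max_i g_i \leq -\alpha$ uniformly on $[0, t^*]$ for some $\alpha > 0$. Convexity of $u \mapsto g_i(x, u)$ in (A5) combined with non-emptiness of the interior of $U(\xi)$ (since $\xi \in G_-$) is what should provide this margin, the resulting endpoint shift being controlled by the mixing parameter and absorbed by $\rho$. A possible alternative would be to argue via lower semicontinuity of the admissible reachable-set map $\bar x \mapsto R_{t^*}(\bar x)$, which rests on (A3) and the closedness of $G$ already established.
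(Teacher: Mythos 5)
Your argument has a genuine gap at precisely the step you yourself flag: the ``strict-feasibility regularization'' of $u_y$. To make your contradiction work you must produce a control $\tilde u_y$ whose trajectory from $\bar{x}$ over $[0,t^*]$ ends in $B_\rho(y)$ and satisfies $\max_i g_i \leq -\alpha$ \emph{uniformly} on $[0,t^*]$ for some $\alpha>0$. Convexity of $u\mapsto g_i(x,u)$ from (A5) and nonemptiness of $\Int(U(\xi))$ give you a strictly feasible control at single points, but this does not yield a uniform margin along the whole arc: mixing the control changes the trajectory $x^{(\cdot,\bar{x})}$, hence the points at which $g$ is being evaluated, so the margin you gain at one time can be lost at another. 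Controlling this self-referential perturbation uniformly on a compact interval would require a substantial bootstrapping or fixed-point argument that is not supplied and is not obviously true. Until that step is proved, the claim $B_\eta(\bar{x})\subset\AAA$ (hence the contradiction) does not follow.

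More importantly, your route rederives a form of semi-permeability from scratch while the paper has already established exactly what is needed, as Corollary~\ref{bar-sem-cor}. The paper's proof uses it directly: from Corollary~\ref{bar-sem-cor}, $R_t(\bar{x})\cap\Int(\AAA)=\emptyset$, i.e.\ $R_t(\bar{x})\subset\cl(\AC)$ for all $0\le t<\bar{t}$; by complementarity $\Int(\AAA)\subset R_t(\bar{x})^{\mathsf C}$ so $\cl(\Int(\AAA))\subset\cl(R_t(\bar{x})^{\mathsf C})$. Then, by the sequence argument (which you also sketch), $\xi:=x^{(\bar{u},\bar{x})}(t)\in\DAM\cap\cl(\Int(\AAA))$, hence $\xi\in\cl(R_t(\bar{x})^{\mathsf C})$; and trivially $\xi\in R_t(\bar{x})$. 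Since $\partial R_t(\bar{x})=R_t(\bar{x})\cap\cl(R_t(\bar{x})^{\mathsf C})$, the conclusion follows. This is the piece of structure your proposal misses: instead of trying to upgrade an interior point of $R_{t^*}(\bar{x})$ into an interior neighborhood of $\bar{x}$ in $\AAA$, one should observe that $R_t(\bar{x})$ is entirely trapped in $\cl(\AC)$, so $R_t(\bar{x})$ has no interior to offer on the $\Int(\AAA)$ side at all.
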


\begin{proof}
We first prove that $R_{t}(\bar{x}) \subset \cl (\AC)$ for all $0\leq t<\bar{t}$. Assume by contradiction that for some $0 \leq t < \bar{t}$ we have $R_{t}(\bar{x})\cap \Int(\AAA)\neq \emptyset$. Then $\exists u\in \mathcal{U}$ such that $x^{(u,\bar{x})}(t)\in \Int(\AAA)$ for some $0\leq t<\bar{t}$, which contradicts the fact that $\bar{x} \in \DAM$ by Corollary~\ref{bar-sem-cor}, hence $R_{t}(\bar{x}) \subset \cl (\AC)$.

By complementarity $\Int(\AAA)\subset R_t(\bar{x})^{\mathsf C}$, and thus $\cl(\Int(\AAA))\subset \cl(R_t(\bar{x})^{\mathsf C})$. Thus, assume that $\bar{x}\in \DAM \cap \cl(\Int(\AAA))$ and that there exists $\bar{u}\in \UU$ as in Proposition \ref{boundary:prop}. Then it can be shown as in the proof of Corollary~\ref{bar-sem-cor} that there exists a sequence $\{x_k\}_{k\in \NN}$, with $x_k \in \Int(\AAA)$, and a sequence $\{ u_k\}_{k\in \NN}$, $u_k \in \UU$, such that every integral curve $x^{(u_{k},x_{k})}$ lies in $\Int(\AAA)$ and the sequence $\{x^{(u_{k},x_{k})}\}_{k}$ converges uniformly to $x^{(\bar{u},\bar{x})}$ on every compact interval $[0,T]$. We therefore immediately deduce that $x^{(\bar{u},\bar{x})}(t)\in\DAM \cap \cl(\Int(\AAA))$ for all $t < \bar{t}$ and hence that  $x^{(\bar{u},\bar{x})}(t)\in  \cl(R_t(\bar{x})^{\mathsf C})$. But because $x^{(\bar{u},\bar{x})}(t)\in R_t(\bar{x})$, and since $\partial R_t(\bar{x}) = R_t(\bar{x}) \cap \cl(R_t(\bar{x})^{\mathsf C})$, we conclude that $x^{(\bar{u},\bar{x})}(t)\in \partial R_t(\bar{x})$.
\end{proof}

\begin{proof}[Proof of Theorem \ref{BarrierTheorem}]

By Lemma \ref{att-boundary:lemma} we know that $x^{(\bar{u},\bar{x})}(t)\in \partial R_t(\bar{x})$ for all $0\leq t<\bar{t}$. Therefore, according to Theorem \ref{extrem:thm}, we know that $\bar{u}$ must satisfy \eqref{barriercond-eta}. Then, setting $\lambda^{\bar{u}} = -\eta^{\bar{u}}$ we get \eqref{costateEquation} with \eqref{eq:complement} and that the resulting dualised Hamiltonian $\tilde{\mathcal{H}}(x,u,\lambda,\mu) \triangleq \mathcal{H}(x,u,-\eta,\mu)$, defined by \eqref{eq:HamiltonianDef}, now must be minimised. Now taking the final conditions for $\lambda^{\bar{u}}$ as in Proposition \ref{ult-tan-1d-pr}, namely \eqref{nonsmoothUltimateTan}, we immediately deduce that at time $\bar{t}$ the minimised Hamiltonian must be zero, and thus the constant of \eqref{barriercond-eta} is equal to zero. Finally, according to the complementary slackness condition, \eqref{eq:CompSlackCond}, the minimisation of $\tilde{\mathcal{H}}$ becomes equivalent to \eqref{HamiltonianMinimised} which achieves the proof of the theorem.
\end{proof}

\begin{rem}
	If $\tilde{g}$ is differentiable at the point $z$, condition \eqref{eq:finalConditions} reduces to its smooth counterpart, i.e.,	$\lambda^{\bar{u}}(\bar{t})^T = D\tilde{g}(z) $
\end{rem}

\begin{rem}
	The assumption that $x^{(\bar{u},\bar{x})}\in \DAM \cap \cl(\Int(\AAA))$ means that we possibly miss isolated trajectories which are in $\AAA \setminus \cl(\Int(\AAA))$. The existence and computation of such trajectories, if they exist, are open questions.
\end{rem}

\section{Examples}\label{sec:Examples}

\subsection{Constrained Spring 1}\label{subsec:ConstrainedSpring1}
~

Consider the following constrained mass-spring-damper model:
\[
\left(
\begin{array}{c}
\dot{x}_{1}\\
\dot{x}_{2}
\end{array}
\right)
=
\left(
\begin{array}{cc}
0 & 1\\
-2 & -2
\end{array}
\right)
\left(
\begin{array}{c}
x_{1}\\
x_{2}
\end{array}
\right)
+\left(
\begin{array}{c}
0\\
1
\end{array}
\right)u
,\,\,\,\,\,|u|\leq 1, \,\,\,\,\,x_2 - u \leq 0
\]
{
\flushleft
where $x_1$ is the mass's displacement. The spring stiffness is here equal to 2 for a mass equal to 1 and the friction coefficient is equal to 2. $u$ is the force applied to the mass.
}

We identify $g(x,u) = x_2 - u$, $U = [-1,1]$ and $\tilde{g}(x) = x_2 - 1$. We also identify the following sets: $G = \{x\in\mathbb{R}^2:x_2\leq 1\}$, $G_{0} = \{x\in G: x_2 = 1\}$ and $U(x) = \{u\in U : x_2\leq u \leq 1\}$. Note that if $z \triangleq (z_1,z_2)\in G_0$, i.e. $z_2 = 1$, then $U(z)$ is the singleton $U(z) = \{1\}$. 

We have $\partial \tilde{g}(z) = \{ (0,1)^T \} = D\tilde{g}(z)^T$ (which means that $\tilde{g}$ is differentiable everywhere) and the ultimate tangentiality condition reads:
\[
\min_{u\in U(z)} D \tilde{g}(z)^T f(z,u) = 0
\]
which gives
\[
\min_{u\in U(z)} -2z_1 - 2z_2 + u = -2z_1 - 2 + 1 = 0
\]
Thus $z = (-\frac{1}{2},1)$. 

Let us now compute $\lambda(\bar{t})$. From \eqref{eq:finalConditions}, which here reduces to \eqref{eq:smoothUltimateTan}, we get that $\lambda(\bar{t}) = D\tilde{g}(z) = (0,1)$.

We now construct the barrier by integrating backwards from $z$ and $\lambda(\bar{t})$. From the minimisation of the Hamiltonian, $H(x,\lambda,u) = \lambda_1x_2 + \lambda_2(-2x_1 - 2x_2 + u)$, condition \eqref{HamiltonianMinimised}, we find that the control $\bar{u}$ associated with the barrier is given by
\[
\min_{x_2\leq u \leq 1} \lambda_1x_2 + \lambda_2(-2x_1 - 2x_2 + u) = 0
\]
which gives:
\[
\begin{array}{lll}\mathrm{if}~\lambda_2(t) < 0 & \\
&\bar{u}(t)=1 \\
\mathrm{if}~\lambda_2(t) > 0 & \\
&\bar{u}(t)=\left\{
\begin{array}{lll}
x_2 & \mathrm{if} & x_2 \in ]-1,1]\\
-1 & \mathrm{if} & x_2 \in ]-\infty,-1]
\end{array}\right.\\
\mathrm{if}~\lambda_2(t) = 0 & \\
&\bar{u}(t)=\mathrm{arbitrary}
\end{array}
\]

We note from condition \eqref{costateEquation} that if the constraint is active (i.e. $g(x,u) = 0$), the costate differential equation is given by
\[
\dot{\lambda}^{\bar{u}} = -\frac{\partial f}{\partial x}^T\lambda^{\bar{u}} - \mu^{\bar{u}}\frac{\partial g}{\partial x} =  
\left(
\begin{array}{cc}
0 & 2\\
-1 & 2
\end{array}\right)\lambda^{\bar{u}} - \mu^{\bar{u}}
\left(
\begin{array}{c}
0 \\
1
\end{array}\right)
\]
and is otherwise (when $g(x,u)<0$) given by
\begin{equation}\label{adjeq:nonactive}
\dot{\lambda}^{\bar{u}} = -\frac{\partial f}{\partial x}^T\lambda^{\bar{u}} = \left(
\begin{array}{cc}
0 & 2\\
-1 & 2
\end{array}\right)\lambda^{\bar{u}}.
\end{equation}

Recall that $\lambda_2(\bar{t})>0$ and $x_2(\bar{t}) > 0$. Therefore, because $\lambda$ and $x$ are continuous, $\bar{u}(t) = x_2(t)$ over an interval before $\bar{t}$. We can show that $\bar{u}(t) \neq 1$ over this interval: if $x_2 = 1$ and $u = 1$ over an interval before $\bar{t}$, then we get $\dot{x}_2 = -2x_1 - 2 + 1 = 0$ or $x_1 = -\frac{1}{2}$ which implies $\dot{x}_1 = 0$ for all $t\in]\bar{t} - \eta, \bar{t}],\,\,\eta >0$. However, we would also have $\dot{x}_1 = 1$ over $t\in]\bar{t} - \eta, \bar{t}]$, which contradicts the fact that $\dot{x}_1 = 0$ over this interval.

Therefore, only the constraint $g$ is active over an interval before $\bar{t}$, and by \eqref{eq:HamiltonianKKT}, we obtain $\mu$ over this interval:
\[
\frac{\partial H}{\partial u} + \mu\frac{\partial g}{\partial u} = \lambda_2 - \mu = 0
\]
and thus $\lambda_2 = \mu$. In addition the adjoint satisfies:
\begin{equation}\label{eq:ActiveConstraintDynamics}
\dot{\lambda} = \left(
\begin{array}{cc}
0 & 2\\
-1 & 1
\end{array}\right)\lambda,\quad \forall t\in]\bar{t} - \eta,\bar{t}]
\end{equation}

At some point in time before $\bar{t}$, let us label this point $\hat{t}$, we have $\lambda_2(\hat{t}) = 0$ and it can be verified that, at this time, $x_2(\hat{t}) = 0$ and $\lambda_1(\hat{t}) < 0$. Let us prove that $\lambda_2$ is negative on the interval $[0,\hat{t}]$. If $\lambda_2$ vanishes at some point in time, since we have
\[
\dot{\lambda}_2 = 2\lambda_1 = 0
\]
then $\lambda \equiv 0$ which contradicts our assertion. We conclude that over $[0,\hat{t}]$, $\lambda_2$ is either everywhere positive or everywhere negative.

If over this interval before $\hat{t}$ $\lambda_2 > 0$, then the co-state dynamics are as before, and $\dot{\lambda}_2 < 0$ which is equivalent to $-\lambda_1 + \lambda_2 < 0$, but this contradicts the fact that $\lambda_1(\hat{t}) < 0$. We can conclude that $\lambda_2$ is negative before $\hat{t}$, and that $\bar{u} = 1$ over this period. The costate dynamics are then given by \eqref{adjeq:nonactive}. The sign of $\lambda_2$ then remains negative until the trajectory intersects $G_0$ again. The barrier is shown in Figure \ref{ConstrainedSpringBarrier1}.

\begin{figure}[th]\label{ConstrainedSpringBarrier1}
\begin{center}
\includegraphics[width=0.7\columnwidth]{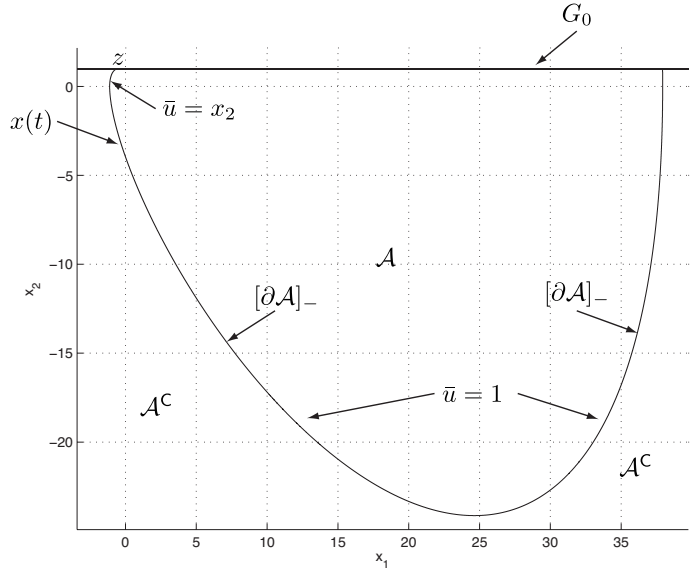}
\caption{Admissible set of the constrained spring from example \ref{subsec:ConstrainedSpring1}}
\end{center}
\end{figure}

\begin{rem}
	Note that Assumption (A4) does not hold true at the final point $z$ since there are two active constraints for only one control. However, since this condition is violated only at this point, we may conclude by continuity that condition \eqref{eq:finalConditions} still holds.
\end{rem}

\subsection{Constrained Spring 2}\label{subsec:ConstrainedSpring2}
~

Consider the same mass-spring-damper system with the same constants as in the previous example, but with a richer constraint:
\begin{equation}\label{diffeq:Ex2}
\left(
\begin{array}{c}
\dot{x}_{1}\\
\dot{x}_{2}
\end{array}
\right)
=
\left(
\begin{array}{cc}
0 & 1\\
-2 & -2
\end{array}
\right)
\left(
\begin{array}{c}
x_{1}\\
x_{2}
\end{array}
\right)
+\left(
\begin{array}{c}
0\\
1
\end{array}
\right)u
,\,\,\,\,\,|u|\leq 1, \,\,\,\,\,x_2(x_2 - u) \leq 0
\end{equation}

We identify $\tilde{g}(x) = x_2^2 - |x_2|$, and $G_0 = \{x:x_2 = 0 \cup x_2 = \pm 1\}$. $\tilde{g}$ is differentiable for $x_2 \neq 0$ and from \eqref{HamiltonianMinimised} and \eqref{eq:finalConditions} we identify, in same manner as in the previous example, two points of ultimate tangentiality, namely $z=(-\frac{1}{2},1)$ along with $\lambda(\bar{t}) = (0,1)$, and $z = (\frac{1}{2},-1)$ along with $\lambda(\bar{t}) = (0,-1)$. We defer the treatment of the $x_1$ axis, which is also in $G_0$, to the discussion below.

From the minimisation of the Hamiltonian, which is the same as in the previous example, we find the control $\bar{u}$:
\[
\begin{array}{lll}\mathrm{if}~\lambda_2(t) < 0 & \\
&\bar{u}(t)=\left\{
\begin{array}{lll}
1 & \mathrm{if} & x_2 \in ]0,1]\\
x_2 & \mathrm{if} & x_2 \in ]-1,0[
\end{array}\right. \\
\mathrm{if}~\lambda_2(t) > 0 & \\
&\bar{u}(t)=\left\{
\begin{array}{lll}
x_2 & \mathrm{if} & x_2 \in ]0,1]\\
-1 & \mathrm{if} & x_2 \in ]-1,0[
\end{array}\right.\\
\mathrm{if}~\lambda_2(t) = 0 & \\
&\bar{u}(t)=\mathrm{arbitrary}
\end{array}
\]

If we now integrate backwards from the points $(-\frac{1}{2},1)$ and $(\frac{1}{2},-1)$ with the control $\bar{u}(t)$ we obtain the barrier as in Figure \ref{Fig:ConstrainedSpringBarrier2}. It turns out that along both curves $\bar{u}(t) = x_2(t)$.

\begin{figure}[thpb]\label{Fig:ConstrainedSpringBarrier2}
	\begin{center}
		\includegraphics[width=0.7\columnwidth]{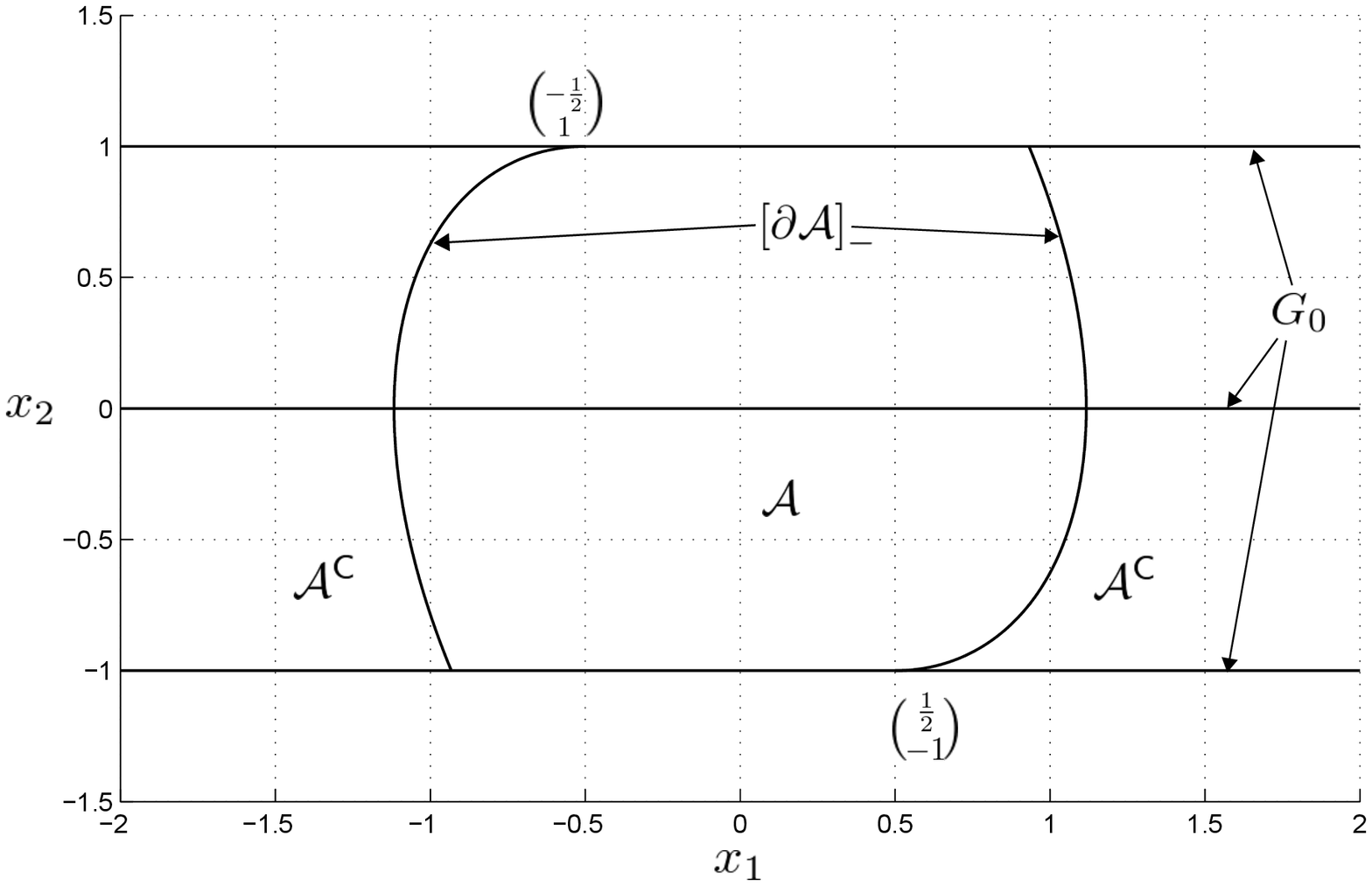}
		\caption{Admissible set of the constrained spring from example \ref{subsec:ConstrainedSpring2}}
	\end{center}
\end{figure}

Let us now turn to the $x_1$ axis, where $\tilde{g} = x_2^2 - |x_2|$ is non differentiable. For any $z$ on the $x_1$ axis, we have $U(z) = [-1,1]$ and $\partial \tilde{g} (z) = \bar{\co}\left( (0,-1)^T, (0,1)^T \right) = \{0\} \times [-1,1]$ and we must have:	
\begin{equation}\label{eq:TanCond1}
	\min_{u\in [-1,1]} \max_{\xi\in\partial \tilde{g}(\tilde{z})} \xi.f(\tilde{z},u) = 0 = \min_{u\in [-1,1]} \max_{\xi_2\in[-1,1]} \xi_2 (-2x_1 + u)
\end{equation}
For each $-\frac{1}{2}\leq z_1\leq \frac{1}{2}$ equation \eqref{eq:TanCond1} has a solution given by $\xi = (0,\sgn(-2z_1 + u))$ from which we deduce that $\bar{u} = 2z_1$. However, one can directly verify that the integral curves of \eqref{diffeq:Ex2} with endpoints in the set $[-\frac{1}{2}, \frac{1}{2}] \times \{ 0 \}$ with the control $u = x_2$ all correspond to admissible curves (integrated backwards) and therefore do not belong to the barrier, but that they make the constraint $g(x^{(\bar{u},\bar{x})}(t),\bar{u}(t))$ equal to 0 for $\bar{u} = x_2$ for all $\bar{x} \in [-\frac{1}{2}, \frac{1}{2}] \times \{ 0 \}$ and for all $t$. This attests that our conditions are only necessary and far from being sufficient.

\begin{rem}
	Note that, as in Example \ref{subsec:ConstrainedSpring1}, Assumption (A4) does not hold true at the final points $z \in G_0$ since there are two active constraints for only one control. Again, we conclude by continuity that condition \eqref{eq:finalConditions} still holds.
\end{rem}

\section{Conclusion}\label{sec:Conclusion}

In this paper we have extended the work on admissible sets and barriers, introduced in \cite{DeDona_siam}, to the case of mixed constraints. In particular, we have shown that the properties of the barrier in the mixed constraint setting prolong those in the pure state constraint setting, with some significant differences concerning its intersection with the set given by $G_0 = \{x : \min_{u\in U} \max_{i = 1,\dots,p} g_i(x,u) = 0 \}$, intersection that occurs tangentially in a generalised sense.

We also had to adapt the minimum-like principle, that allows the barrier's construction, as in Theorem \ref{BarrierTheorem}: a form of the Pontryagin maximum principle, presented in Appendix \ref{AppendixSec:PMPproof}, in terms of the boundary of the reachable set, was needed. However, the result in this form is available only for control functions that are assumed to be piecewise continuous. The possibility of relaxing this assumption to merely measurable controls is an open question, and will be the subject of future works. 

Proving Theorem \ref{extrem:thm} required the introduction of the regularity assumption (A4) to guarantee the existence of needle perturbations that satisfy the constraints, even when some of them are active. This assumption is also used in the proof of the ultimate tangentiality condition \eqref{nonsmoothUltimateTan}. However, assumption (A4) may appear to be too strict, especially on the set $G_0$, since on $G_0$ $\bar{u}$ belongs to the boundary of $U$, thus adding at least one new constraint to the previous ones, and leading to a Jacobian whose lines are no more independent. However, it might be possible to avoid evaluating this rank on $G_0$ by a continuity argument. This point will be addressed in future research.

\appendix

\section{Compactness of solutions}\label{Appendix:Compactness}

We slightly extend the compactness results proven in \cite[Appendix A]{DeDona_siam} to the mixed constraint context. We recall without proof, from \cite{DeDona_siam}, the following lemma and its corollary:
\begin{lem}\label{bound-lem}
If assumptions (A1) and (A2) of Section~\ref{sec:ConsDynCon} hold true, equation (\ref{eq:state_space}) admits a unique absolutely continuous integral curve over $[t_0, +\infty)$ for every $u\in \UU$ and every bounded initial condition $x_{0}$, which remains bounded for all finite $t\geq t_0$,
\begin{equation}\label{bound}
\Vert x(t) \Vert \leq \left( (1+ \Vert x_{0} \Vert^{2} )e^{2 C (t-t_{0})} -1 \right)^{\frac{1}{2}} \triangleq K(t).
\end{equation}

Moreover, we have
\begin{equation}\label{equicont}
\Vert x(t)-x(s) \Vert \leq \bar{C} \vert t-s \vert
\end{equation}
for all $t, s \in [t_0,T]$ and all $T>t_0$, where
\begin{equation}\label{Calphabound}
\bar{C} \triangleq
\sup_{\Vert x \Vert  \leq  K(T), u\in U}  \Vert f(x,u)\Vert < + \infty.
\end{equation}
\end{lem}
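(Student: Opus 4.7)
The plan is to use standard Carathéodory ODE theory combined with a Gronwall estimate driven by assumption (A2). Since $u$ is only Lebesgue measurable, equation \eqref{eq:state_space} must be interpreted in the Carathéodory sense: the right-hand side $(t,x)\mapsto f(x,u(t))$ is measurable in $t$ for fixed $x$ (as the composition of the continuous $f(\cdot,u)$ with the measurable $u$) and $C^2$, hence in particular locally Lipschitz, in $x$ for \emph{a.e.} $t$ (by (A1) and the compactness of $U$). Carathéodory's theorem then provides, for each bounded $x_0$, a unique absolutely continuous solution on some maximal interval $[t_0,t_0+\delta)$.

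To upgrade this to a global solution and derive the explicit bound \eqref{bound}, I would consider the scalar function $V(t)\triangleq 1+\Vert x(t)\Vert^{2}$. It is absolutely continuous, and for \emph{a.e.}\ $t$ we have
\begin{equation*}
\dot V(t) \;=\; 2\,x(t)^{T} f(x(t),u(t)) \;\leq\; 2\,\bigl|x(t)^{T} f(x(t),u(t))\bigr| \;\leq\; 2C\bigl(1+\Vert x(t)\Vert^{2}\bigr) \;=\; 2C\,V(t),
\end{equation*}
where the last inequality uses (A2) uniformly in $u\in U$. Gronwall's inequality then gives $V(t)\leq V(t_0)e^{2C(t-t_0)}$, which, after subtracting $1$ and taking square roots, yields exactly \eqref{bound}. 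Since this bound is finite on every finite interval, the solution cannot blow up, so the maximal interval is $[t_0,+\infty)$ and uniqueness persists globally by a standard concatenation argument.

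For the Lipschitz-in-time estimate \eqref{equicont}, I would fix $T>t_0$ and use the just-established bound to restrict $x(\tau)$, for $\tau\in[t_0,T]$, to the compact ball $\{\Vert x\Vert\leq K(T)\}$. The vector field $f$ is continuous on the compact set $\{\Vert x\Vert\leq K(T)\}\times U$ (by (A1) and compactness of $U$), so the supremum $\bar C$ in \eqref{Calphabound} is finite. Since $x$ is absolutely continuous with $\Vert\dot x(\tau)\Vert=\Vert f(x(\tau),u(\tau))\Vert\leq \bar C$ for \emph{a.e.}\ $\tau\in[t_0,T]$, integrating from $s$ to $t$ yields $\Vert x(t)-x(s)\Vert\leq \bar C|t-s|$.

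There is no real obstacle here: the argument is entirely classical. The only point deserving care is the use of (A2) uniformly in $u\in U$ in the Gronwall step (so that the bound does not depend on the particular $u\in\UU$ chosen), and the fact that the differential inequality for $V$ holds only \emph{a.e.}, which is nevertheless sufficient for the Gronwall comparison because $V$ is absolutely continuous.
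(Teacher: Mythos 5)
Your proof is correct and is the standard argument that the explicit form of the bound \eqref{bound} presupposes: Carath\'eodory existence/uniqueness, the Gronwall estimate on $V(t)=1+\Vert x(t)\Vert^{2}$ via (A2), and the uniform bound on $\Vert f\Vert$ over $\{\Vert x\Vert\leq K(T)\}\times U$ for the Lipschitz-in-time estimate. The paper recalls this lemma without proof from \cite{DeDona_siam}, and your derivation matches the intended one.
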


\begin{cor}\label{relatcomp-lem}
Let us denote by $\XX(x_{0})$ the set of integral curves issued from an arbitrary $x_{0}$, $\Vert x_{0}\Vert < \infty$, and satisfying (\ref{eq:state_space}), (\ref{eq:initial_condition}), (\ref{eq:input_constraint}).

If assumptions (A1) and (A2) of Section~\ref{sec:ConsDynCon} hold true, $\XX(x_{0})$ is a subset of  $C^{0}([0,\infty), \RR^{n})$, the space of continuous functions from $[0,\infty)$ to $\RR^{n}$, and is relatively compact with respect to the topology of uniform convergence on $C^{0}([0,T], \RR^{n})$ for all finite $T\geq 0$. In other words, from any sequence of integral curves in $\XX(x_{0})$, one can extract a subsequence whose convergence is uniform on every interval $[0,T]$, with $T\geq 0$ and finite, and whose limit belongs to $C^{0}([0,\infty), \RR^{n})$.
\end{cor}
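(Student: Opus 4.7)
The plan is to derive the corollary from the Arzelà--Ascoli theorem applied on each finite horizon, then extend to $[0,\infty)$ by a diagonal extraction. First, I would observe that by Lemma~\ref{bound-lem} every $x\in\XX(x_{0})$ is absolutely continuous on $[0,\infty)$, hence continuous, so $\XX(x_{0})\subset C^{0}([0,\infty),\RR^{n})$ is automatic.

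Fix any finite $T>0$. I would verify the two hypotheses of Arzelà--Ascoli on $C^{0}([0,T],\RR^{n})$ directly from Lemma~\ref{bound-lem}. Uniform boundedness follows from \eqref{bound}: every curve in $\XX(x_{0})$ satisfies $\Vert x(t)\Vert\leq K(T)$ on $[0,T]$, and $K(T)$ depends only on $\Vert x_{0}\Vert$, $C$ and $T$, hence not on the particular $u\in\UU$. Equicontinuity is even stronger: by \eqref{equicont} every $x\in\XX(x_{0})$ is $\bar{C}$-Lipschitz on $[0,T]$ with the constant $\bar{C}$ of \eqref{Calphabound} depending only on $K(T)$ and $U$, not on the curve. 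Arzelà--Ascoli then delivers relative compactness of $\XX(x_{0})$ in $C^{0}([0,T],\RR^{n})$ endowed with the topology of uniform convergence.

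To globalise, given any sequence $\{x_{k}\}_{k\in\NN}\subset\XX(x_{0})$, I would apply the previous step successively on the intervals $[0,1],[0,2],\dots$ to produce nested subsequences $\{x_{k}^{(n)}\}_{k}$, where $\{x_{k}^{(n)}\}_{k}$ converges uniformly on $[0,n]$ and is a subsequence of $\{x_{k}^{(n-1)}\}_{k}$. The Cantor diagonal sequence $\{x_{k}^{(k)}\}_{k}$ is eventually a subsequence of each $\{x_{k}^{(n)}\}_{k}$ and therefore converges uniformly on every $[0,T]$ with $T<\infty$. Its pointwise limit $\bar{x}$ is continuous on each $[0,n]$ as a uniform limit of continuous functions, hence continuous on $[0,\infty)$.

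I do not foresee a real obstacle: the entire argument is a packaging of Lemma~\ref{bound-lem} into the Arzelà--Ascoli framework plus a standard diagonal extraction. The only subtle point worth emphasising is that the statement makes no claim that the limit $\bar{x}$ itself lies in $\XX(x_{0})$; that compatibility would require a Filippov-type selection argument exploiting assumption (A3) on the convexity of the vectogram $f(x,U)$, exactly as in the proof of Proposition~\ref{closedness-prop}, but it is not part of what has to be established here.
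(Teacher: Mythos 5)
Your proposal is correct and follows exactly the approach the paper relies on (the corollary is recalled without proof from \cite{DeDona_siam}, but the later proof of Lemma~\ref{compact-lem} explicitly invokes ``the same argument as in the proof of Corollary~\ref{relatcomp-lem}, using Ascoli-Arzel\`a's theorem''): uniform boundedness from~\eqref{bound}, equicontinuity from the Lipschitz bound~\eqref{equicont}, Arzel\`a--Ascoli on each $[0,T]$, then a Cantor diagonal extraction. Your closing remark is also on point --- the corollary asserts only relative compactness in $C^{0}$, and placing the limit back in $\XX(x_{0})$ is precisely the additional content of Lemma~\ref{compact-lem} under (A3).
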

\bigskip

We now adapt the proof of \cite[Lemma A.2, Appendix A]{DeDona_siam}. Since we strictly follow the same lines, only its modifications are presented.
\bigskip

\begin{lem}\label{compact-lem}
Assume that (A1), (A2) and (A3) of Section~\ref{sec:ConsDynCon} hold. Given a compact set $\XX_{0}$ of $\RR^{n}$, the set $\XX\triangleq \bigcup_{x_{0}\in \XX_{0}}\XX(x_{0})$ is compact with respect to the topology of uniform convergence on $C^{0}([0,T], \RR^{n})$ for all $T\geq 0$, namely from every sequence $\{x^{(u_{k},x_{k})}\}_{k\in \NN} \subset \XX$ one can extract a  uniformly convergent subsequence on every finite interval $[0,T]$, whose limit $\xi$ is an absolutely continuous integral curve on $[0,\infty)$, belonging to $\XX$. In other words, there exists $\bar{x}\in \XX_{0}$ and $\bar{u}\in \UU$ such that $\xi(t)= x^{(\bar{u},\bar{x})}(t)$ for almost all $t \geq 0$. 

Moreover, if the sequence $\{(x^{(u_{k},x_{k})}, u_{k})\}_{k\in \NN}$ satisfies the constraint $g(x^{(u_{k},x_{k})}(t), u_{k}(t)) \preceq 0$ for all $k$ and almost all $t$, then the limit also  does: $g(x^{(\bar{u},\bar{x})}(t),\bar{u}(t)) \preceq 0$ for almost all $t$.
\end{lem}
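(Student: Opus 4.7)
I would follow the lines of the proof of Lemma A.2 in \cite{DeDona_siam}, modifying only the steps where the $u$-dependence of the constraints matters. The bulk of the argument---that a subsequence of the curves $x^{(u_{k},x_{k})}$ converges uniformly on compact intervals to an absolutely continuous $\xi = x^{(\bar{u},\bar{x})}$ for some $\bar{x}\in \XX_{0}$ and $\bar{u}\in\UU$---is unchanged, and the new difficulty is concentrated in propagating $g(\cdot,u) \preceq 0$ to the limit.

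For completeness I would first repeat the extraction: compactness of $\XX_{0}$ gives $x_{k} \to \bar{x}$ along a subsequence; Lemma~\ref{bound-lem} gives uniform bounds and Lipschitz constants on every compact interval, so Arzelà--Ascoli together with a diagonal argument over $T \in \NN$ yields uniform convergence $x^{(u_{k},x_{k})} \to \xi$ on every $[0,T]$. The derivatives $\dot{x}^{(u_{k},x_{k})} = f(x^{(u_{k},x_{k})}, u_{k})$ are uniformly $L^{\infty}$-bounded, hence converge weak-$*$ (after a further extraction) to some $v$, and passing to the limit in the integral form shows that $\xi$ is absolutely continuous with $\dot\xi = v$ a.e. Applying Mazur's lemma, convex combinations of the $\dot{x}^{(u_{j},x_{j})}$ converge a.e.\ to $\dot\xi$; at a.e.\ $t$, these combinations lie in the convex hull of $\bigcup_{j\ge k} f(x^{(u_{j},x_{j})}(t), U)$, which by continuity of $f$, compactness of $U$, and (A3) Hausdorff-converges to the convex set $f(\xi(t), U)$, so $\dot\xi(t)\in f(\xi(t), U)$ a.e. A standard Filippov measurable selection then supplies $\bar{u} \in \UU$ with $\dot\xi = f(\xi,\bar{u})$ a.e., so $\xi = x^{(\bar{u},\bar{x})}$.

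The main obstacle is to arrange that this $\bar{u}$ simultaneously satisfies $g(\xi(t),\bar{u}(t)) \preceq 0$ a.e., since the Filippov selection above respects only the ODE and the pointwise choice is not canonical. My plan is to replace the target set of the selection by
\[
\Sigma(t) \triangleq \{u \in U : f(\xi(t), u) = \dot\xi(t),\ g(\xi(t), u) \preceq 0\}
\]
and apply Filippov's theorem to $\Sigma$, which is readily seen to be a measurable closed-valued multimap; once $\Sigma(t) \neq \emptyset$ is verified for a.e.\ $t$, a measurable selection yields the desired $\bar{u}$. To establish this non-emptiness I would rerun the Mazur argument on the augmented sequence $\bigl(\dot{x}^{(u_{k},x_{k})}, g(x^{(u_{k},x_{k})}, u_{k})\bigr)$, which is uniformly bounded and whose $g$-component is non-positive a.e., so that its weak-$*$ limit $(\dot\xi, h)$ satisfies $h \preceq 0$ a.e. Combining (A3) with (A5) and the convexity of $U$, I would then argue that the closed convex hull of $\{(f(\xi(t), u),\ g(\xi(t), u) + \RR_{+}^{p}) : u \in U\}$ contains $(\dot\xi(t), h(t))$, yielding a $u^{*}\in U$ with $f(\xi(t), u^{*}) = \dot\xi(t)$ and $g(\xi(t), u^{*}) \preceq 0$. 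I expect the hardest step to be making this last set-valued convexity argument rigorous under (A3) and (A5) alone, since when $f$ is nonlinear in $u$ the augmented vectogram need not itself be convex; I would carry it out via a Young-measure representation of the weak-$*$ limit of $\{u_{k}\}$ together with a Caratheodory-type decomposition exploiting the convexity of each $g_{i}$ in $u$, so that the \emph{upper} closed convex hull inherits the required structure.
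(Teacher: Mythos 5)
Your proposal follows the same skeleton as the paper's proof: Ascoli--Arzel\`a extraction (via Lemma~\ref{bound-lem} and Corollary~\ref{relatcomp-lem}), Mazur's lemma applied to the \emph{augmented} sequence $\bigl(f(x^{(u_k,x_k)},u_k),\,g(x^{(u_k,x_k)},u_k)\bigr)$, and finally a measurable selection invoking (A3) and (A5). One structural refinement you make is worth noting: you set up the selection with the weaker --- and in fact more defensible --- target set $\Sigma(t)=\{u\in U: f(\xi(t),u)=\dot\xi(t),\ g(\xi(t),u)\preceq 0\}$, asking only for $g(\xi(t),u)\preceq 0$, whereas the paper asserts exact equality $\bigl(f(\xi(t),\bar u(t)),g(\xi(t),\bar u(t))\bigr)=(\bar F(t),\bar g(t))$.

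The genuine gap is that you leave the crucial step --- non-emptiness of $\Sigma(t)$ for a.e.\ $t$ --- as a plan rather than a proof, and the obstacle you flag is real. By (A3), $\dot\xi(t)\in f(\xi(t),U)$, giving some $u^{**}$ realising the velocity; by (A5), convexity of $U$, and Jensen's inequality applied to the barycentre $u^*=\int_U u\,d\nu_t(u)$ of a Young-measure limit $\nu_t$, one gets $g(\xi(t),u^*)\preceq 0$; but there is no evident reason for $u^*$ and $u^{**}$ to coincide when $f$ is nonlinear in $u$, since the augmented set $\{(f(x,u),\tilde g):u\in U,\ \tilde g\succeq g(x,u)\}$ is not obviously convex under (A3)--(A5). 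The Young-measure plus Carath\'eodory decomposition you sketch is a sensible place to look, but as written it is not carried out. Be aware that this is precisely the point the paper itself compresses into a single sentence invoking (A3), (A5) and the measurable selection theorem; a fully rigorous treatment would have to either exhibit the required convexity of that augmented set or acknowledge that a stronger hypothesis on the joint vectogram is being used implicitly.
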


\begin{proof}
Since $\XX_{0}$ is compact, it is immediate to extend inequalities (\ref{bound}) and (\ref{equicont}) to integral curves with arbitrary $x_{0}\in \XX_{0}$ by taking, in the right-hand side of (\ref{bound}), the supremum over all $x_{0}\in \XX_{0}$. Thus,
by the same argument as in the proof of Corollary~\ref{relatcomp-lem}, using Ascoli-Arzel\`{a}'s theorem, we conclude that $\XX$ is relatively compact with respect to the topology of uniform convergence on $C^{0}([0,T], \RR^{n})$, for all $T\geq 0$. The proof that, from every sequence $\{x^{(u_{k},x_{k})}\}_{k\in \NN} \subset \XX$, one can extract a  uniformly convergent subsequence on every finite interval $[0,T]$ whose limit $\xi$ belongs to $\XX$ is done exactly as in \cite{DeDona_siam}.

Accordingly, the sequence of functions $\{ t \mapsto g(x^{(u_{k},x_{k})}(t),u_{k}(t)) : k\in \NN \}$ is bounded in $L^{2}([0,T],\RR^{n})$ for every finite $T$, which implies that this sequence contains at least a weakly convergent subsequence (still denoted by $g(x^{(u_{k},x_{k})},u_{k})$). We denote by $\bar{g}$ its weak limit, independent of $T$ as above.

Recall from \cite{DeDona_siam} that we denote $\bar{x}=\lim_{k\rightarrow \infty} x_{k}$ and $F_{k}(t)= f(x^{(u_{k},x_{k})}(t),u_{k}(t))$.
By Mazur's Theorem (see e.g. \cite[Chapter V, \S1, Theorem 2, p. 120]{yosida}), for every $k$, there exists a sequence $\{\alpha_1^{k}, \ldots, \alpha_{k}^{k}\}$ of non negative real numbers, with $\sum_{i=1}^{k} \alpha_{i}^{k} = 1$, such that the sequence 
$$\left( \begin{array}{c} \tilde{F}_{k}\\ \tilde{g}_{k} \end{array} \right) \triangleq \sum_{i=1}^{k} \alpha_{i}^{k}
\left( \begin{array}{c} F_{i}\\ g(x^{(u_{i},x_{i})},u_{i})\end{array} \right)$$ 
is strongly convergent to $\left( \begin{array}{c} \bar{F}\\ \bar{g} \end{array}\right)$ in every $L^{2}([0,T],\RR^{n})$ for all finite $T$.
Note that this property \emph{a fortiori} holds true if we replace the sequence $F_{i}$ by any subsequence $\left( \begin{array}{c} F_{i}\\ g(x^{(u_{i},x_{i})},u_{i})\end{array} \right)$ constructed by selecting a subsequence of indices $i_{j}$ such that, given $\ee >0$, 
$$\sup_{t\in [0,T]} \left( \Vert f(x^{(u_{i_{j}},x_{i_{j}})}(t), u_{i_{j}}(t)) - f(\xi(t), u_{i_{j}}(t))\Vert + \Vert   g(x^{(u_{i_{j}},x_{i_{j}})}(t), u_{i_{j}}(t)) - g(\xi(t), u_{i_{j}}(t))\Vert \right) < \ee 2^{-j}$$ 
for each $j$, which is indeed possible thanks to the uniform convergence of $x^{(u_{k},x_{k})}$ to $\xi$ and the continuity of $f$ and $g$.  Note also that the limit $\left( \begin{array}{c} \bar{F}\\ \bar{g} \end{array}\right)$ remains the same (for convenience of notation, we keep the same symbols for the $\alpha_{j}^{k}$'s, but we remark that these coefficients have to be adapted relative to the new subsequence).

We therefore deduce, following \cite{DeDona_siam},  that $\bar{F} (t)$ belongs  almost everywhere to the closed convex hull of $\{f(\xi(t),u_{i_{j}}(t))\}_{j\in \NN}$ which is contained in $f(\xi(t), U)$ according to (A3) and, with an obvious adaptation, that $\bar{g}(t) \in g(\xi(t),U)$ for almost all $t$. We immediately conclude that if $g(x^{(u_{k},x_{k})}(t),u_{k}(t)) \preceq 0$ for all $k$ and almost every $t$, it is the same for any convex combination and therefore $\bar{g}(t) \preceq 0$ for almost all $t$.

Finally, again according to (A3) and (A5), there exists, by the measurable selection theorem \cite{Cast_Val}, $\bar{u}\in \UU$ such that 
$$\left( \begin{array}{c} f(\xi(t),\bar{u}(t)) \\ g(\xi(t),\bar{u}(t)) \end{array} \right)
=  \left( \begin{array}{c} \bar{F}(t)\\ \bar{g}(t) \end{array}\right) \quad \mbox{a.e.}~t\in [0,\infty).$$
Thus, we conclude that $\xi$ satisfies $\dot{\xi}=f(\xi,\bar{u})$ almost everywhere, with $\xi(0)=\bar{x}\in \XX_0$. By the uniqueness of integral curves of (\ref{eq:state_space}), we conclude that $\xi(t)=x^{(\bar{u},\bar{x})}(t)$ almost everywhere and, thus, that $\xi\in \XX$. Accordingly, we indeed have $\bar{g}(t)= g(\xi(t),\bar{u}(t)) = g(x^{(\bar{u},\bar{x})}(t),\bar{u}(t)) \preceq 0$ a.e. $t\in [0,\infty)$, which achieves to prove the lemma.
\end{proof}

\section{Maximum principle for problems with mixed constraints}\label{Appendix:PMP}
~

In this appendix we sketch a version of the maximum principle for problems with mixed constraints describing the extremal curves as those whose endpoints at each time $t$ belong to the boundary of the reachable set at the same instant of time. This form is useful to prove Theorem \ref{BarrierTheorem}. The proof draws content from \cite{Lee_Markus}, where the principle is proved in the particular case of constraints on the control, and \cite{PBGM}, where the principle is proved in the context of optimising a cost function for systems with both constraints on the control and the state, but which are not mixed, though a remark indicating the possibility of its extension to mixed constraints is given in \cite[Chapter VI, \S 35]{PBGM}. See also \cite[Chapter 7]{Hestenes} for a proof in the framework of the Calculus of Variations.  For a survey on the maximum principles with mixed constraints, the reader may refer to \cite{Hartl_et_al}.

In our treatment we will introduce the suitable perturbations to \emph{regular trajectories}, similar to \cite{PBGM}, that are needed to generate the so-called perturbation cone, the latter being crucial to obtain the necessary conditions of the maximum principle. Throughout the analysis we assume that the extremal control is piecewise continuous as in the above cited references.

\subsection{Control perturbations}\label{AppendixSec:ElementaryPert}
~

Consider an integral curve $x^{(\bar{u},x_0)}$ associated with the piecewise continuous control $\bar{u}$, initiating from the point $x_0$. Let $\tau_k$, $k = 0,\dots,K$, with $\tau_0 = 0$, be a collection of points of continuity of $\bar{u}$ such that $\tau_k - \ee l_k$ is also a point of continuity with $l_k \geq 0$ for all $\ee$ small enough. Assume that $g(x^{(\bar{u},x_0)}(t),\bar{u}(t)) \preceq 0$ for \emph{a.e.} $t \in [\tau_{k-1}, \tau_{k}[$. We will perturb the control over the interval $I_k = [\tau_k - \ee l_k,\tau_k[$ and extend both the control and the integral curve between $\tau_k$ and $\tau_{k+1} - \ee l_{k+1}$ in order to satisfy the constraints. This will be done by first making a subdivision $\sigma_{q}^k$, $q = 1,\dots,d_k$, namely $\tau_k = \sigma_1^k < \dots <\sigma_q^k < \dots < \sigma_{d_k}^k = \tau_{k + 1} - \ee l_{k + 1}$, assumed to contain all discontinuities of $\bar{u}$ on the interval $[\tau_{k}, \tau_{k+1} - \ee l_{k + 1}[$ and adapting $\bar{u}$ and its corresponding integral curve on each subinterval $[\sigma_q^k, \sigma_{q+1}^k[$, $q = 1,\dots,d_k - 1$, using the implicit function theorem.

At $k = 1$, if $s_1 = \#\II(x^{(\bar{u},x_0)}(\tau_1), \bar{u}(\tau_1)) = 0$ and $s_2 = \#\JJ(\bar{u}(\tau_1)) = 0$ (we do not pass on the indexing of $s_1$ and $s_2$ with respect to $k$ and $q$ to avoid too cumbersome notations), we introduce the classical needle perturbation: $\bar{u}_{I_1} \triangleq \bar{u} \Join_{\tau_{1} - \ee l_{1}} v_{I_1}$, defined on the interval $[0,\tau_1[$, with $v_{I_1}$ arbitrarily chosen in $U(x^{(\bar{u},x_0)}(\tau_1 - \ee l_1))$ and constant over $[\tau_1 - \ee l_1, \tau_1[$. We denote by $\xi_{I_1} = x^{(\bar{u}_{I_1},x_0)}(\tau_1)$.

Otherwise, by remarking that $s_1$ and $s_2$ are such that $\max(s_1, s_2) > 0$, according to (A4), define the function
\[
\Gamma_{I_1}(x,u) = \left(\begin{array}{c}
g_{i_1}(x,u)\\
\dots \\
g_{i_{s_1}}(x,u)\\
\gamma_{j_1}(u)\\
\dots\\
\gamma_{j_{s_2}}(u)
\end{array}\right)
\]
and consider the solution $\hat{u}_{I_1}$ of $\Gamma_{I_1}(x,u) = 0$, defined from a neighbourhood $\Nnn_{I_1}$ of $\RR^n \times \RR^{m - (s_1 + s_2)}$ to $\RR^{s_1 + s_2}$. Thus,
$$\Gamma_{I_1}(x,\hat{u}_{I_1}(x,u_{s_1 + s_2 + 1},\dots,u_{m}),u_{s_1 + s_2 + 1},\dots,u_{m}) = 0 \quad \forall (x,u_{s_1 + s_2 +1},\dots,u_{m}) \in \Nnn_{I_1}$$ and we can define the integral curve $x_{I_1}$ by:
\begin{equation}
\dot{x} = f(x,\hat{u}_{I_1}(x,v_{s_1 + s_2 + 1},\dots,v_m),v_{s_1 + s_2 + 1},\dots,v_m)
\end{equation}
starting from $x^{(\bar{u},x_0)}(\tau_1 - \ee l_1)$, with $(v_{s_1 + s_2 + 1}, \dots, v_m)$ arbitrary in the projection of $\Nnn_{I_1}$ on $\RR^{m - (s_1 + s_2)}$, and such that $\bar{u}_{I_1}(t) \triangleq \hat{u}_{I_1}(x_{I_1}(t),v_{s_1 + s_2 + 1},\dots,v_m) \in U(x_{I_1}(t))$ for all $t\in [\tau_1 - \ee l_1, \tau_1[$. In this case we denote $\xi_{I_1} = x_{I_1}(\tau_1)$. 

We now consider the interval $[\tau_1, \tau_2 - \ee l_2]$. If $s_1 = \#\II(x^{(\bar{u},x_0)}(\tau_1), \bar{u}(\tau_1)) = 0$ and $s_2 = \#\JJ(\bar{u}(\tau_1)) = 0$ then $\bar{u}$ is kept the same on $[\sigma_1^1, \sigma_2^1[$ and we denote by $\bar{u}_{I_{1,1}} = \bar{u}_{I_1} \Join_{\tau_1} \bar{u}$ and $\xi_{I_{1,1}} = x^{(\bar{u},\xi_{I_1})}(\sigma^1_2 - \sigma^1_1)$. Otherwise, since $s_1 = \#\II(x^{(\bar{u},x_0)}(\tau_1), \bar{u}(\tau_1))$ and $s_2 = \#\JJ(\bar{u}(\tau_1))$ are such that $\max(s_1, s_2) > 0$, according to (A4), define the function
\[
\Gamma_{I_{1,1}}(x,u) = \left(\begin{array}{c}
g_{i_1}(x,u)\\
\dots \\
g_{i_{s_1}}(x,u)\\
\gamma_{j_1}(u)\\
\dots\\
\gamma_{j_{s_2}}(u)
\end{array}\right)
\]
and consider the solution $\hat{u}_{I_{1,1}}$ of $\Gamma_{I_{1,1}}(x,u) = 0$, defined from a neighbourhood $\Nnn_{I_{1,1}}$ of $\RR^n \times \RR^{m - (s_1 + s_2)}$ to $\RR^{s_1 + s_2}$. Thus,
$$\Gamma_{I_{1,1}}(x,\hat{u}_{I_{1,1}}(x,u_{s_1 + s_2 + 1},\dots,u_{m}),u_{s_1 + s_2 + 1},\dots,u_{m}) = 0 \quad \forall (x,u_{s_1 + s_2 +1},\dots,u_{m}) \in \Nnn_{I_{1,1}}$$ and we can define the integral curve $x_{I_{1,1}}$ by:
\begin{equation}
\dot{x} = f(x,\hat{u}_{I_{1,1}}(x,\bar{u}_{s_1 + s_2 + 1},\dots,\bar{u}_m),\bar{u}_{s_1 + s_2 + 1},\dots,\bar{u}_m)
\end{equation}
starting from $\xi_{I_1}$ at time $\tau_1$ and assume that the interval $[\sigma_1^1, \sigma_2^1[$ is small enough such that its solution remains in $\Nnn_{I_{1,1}}$.

We iteratively apply the same construction for all $q = 2,\dots, d_1$ and thus obtain the perturbed $x_{I_{1,q}}$ and $\bar{u}_{I_{1,q}}$ in each interval $[\sigma^1_q,\sigma^1_{q+1}[$, $q = 1,\dots, d_k - 1$, satisfying the constraints.

Then finally, for $k > 1$, assuming that $x_{I_{k,d_{k}-1}}$ and $\bar{u}_{I_{k,d_{k}-1}}$ have been obtained, we construct $x_{I_{k+1,d_{k+1} - 1}}$ and $\bar{u}_{I_{k+1,d_{k + 1} - 1}}$ by replacing $\tau_1$ in the above algorithm by $\tau_k$ to finally get the complete perturbed trajectory.

According to \cite[Chapter VI, \S 34]{PBGM} we introduce the following notations: the perturbation parameters denoted by $\pi$ belong to the convex cone $\co \{ (\tau_k,l_k,v_k,\ee) : k = 1,\dots,K \}$ and we note $x_{\pi}(t) = x_{I_{k,q}}(t)$ previously defined with the vector of perturbation parameters $\pi$ if $t\in [\sigma_q^k,\sigma_{q+1}^k[$. Then, for a given vector of perturbation parameters $\pi \triangleq \{\tau_1,\dots,\tau_k,\alpha_1l_1,\dots,\alpha_kl_k,v_1,\dots,v_k\}$, with $\alpha_k \geq 0$ and $\sum_{k=1}^K \alpha_k = 1$, we have

\begin{equation}\label{perturbationResult}
x_{\pi}(t) = x^{(\bar{u},x_0)}(t) + \ee \delta x(t) + O(\ee^2)
\end{equation}
with 
\begin{equation}\label{needlePerturbation:deltaX}
\delta x(t) = \sum_{k = 1}^{K} \alpha_k \Phi^{\bar{u}}(t,\tau_{k}) \left[f(x^{(\bar{u},x_0)}(\tau_k),v_k) - f(x^{(\bar{u},x_0)}(\tau_k), \bar{u}(\tau_k)) \right]l_k
\end{equation}
and $\Phi^{\bar{u}}$ the transition matrix of the variational equation:
\begin{equation}\label{eq:VariationalEquation}
\frac{d}{dt}(\Phi^{\bar{u}}(t,\tau)) = \left( \frac{\partial f}{\partial x}(x^{(\bar{u},x_0)}(t),\bar{u}(t)) + \Lambda^{\bar{u}}(t)\frac{\partial g}{\partial x}(x^{(\bar{u},x_0)}(t),\bar{u}(t))\right) \Phi^{\bar{u}}(t,\tau),\quad \Phi^{\bar{u}}(\tau,\tau) = I
\end{equation}
for all $0\leq \tau \leq t$, $\Lambda^{\bar{u}}$ being the piecewise continuous solution of the equation
\begin{equation}
\frac{\partial f}{\partial u}(x^{(\bar{u},x_0)}(t),\bar{u}(t)) + \Lambda^{\bar{u}}(t)\frac{\partial g}{\partial u}(x^{(\bar{u},x_0)}(t),\bar{u}(t)) = 0.
\end{equation}

Now introducing $\eta^{\bar{u}}(t) = (\Phi^{\bar{u}})^{-1}(t,0)\eta_0 = \Phi^{\bar{u}}(0,t)\eta_0$ for an arbitrary $\eta_0\neq 0$ 
and setting 
\begin{equation}\label{def:mu}
\mu^{\bar{u}}(t) \triangleq - \Lambda^{\bar{u}}(t)^T\eta^{\bar{u}}(t)
\end{equation} we get the adjoint equation
\begin{equation}\label{adjoint-eta}
\dot{\eta}^{\bar{u}}(t)= - \left( \frac{\partial f}{\partial x}(x^{(\bar{u},x_{0})}(t),\bar{u}(t))\right)^T\eta^{\bar{u}}(t) + \sum_{i=1}^p \mu^{\bar{u}}_i(t)\frac{\partial g_i}{\partial x}^T(x^{(\bar{u},x_{0})}(t),\bar{u}(t))
\end{equation}
and it can be proven that
\begin{equation}\label{VarAdjConstant}
\frac{d}{dt}(\delta x(t)^T \eta^{\bar{u}}(t)) = 0\quad \forall t \quad \forall \eta_0 \neq 0
\end{equation}

According to \eqref{needlePerturbation:deltaX}, for all perturbation parameters $\pi$, we have thus defined the so-called \emph{tangent perturbation cone}, denoted by $\mathcal{K}_t$ and $\eta^{\bar{u}}$ may be interpreted as the normal to the separating hyperplane to $\mathcal{K}_t$; moreover $\mu^{\bar{u}}$ defined by \eqref{def:mu} may be interpreted as the Karush-Kuhn-Tucker multiplier associated with the constraints $g \preceq 0$. The interested reader may refer to \cite{PBGM} or \cite{Hestenes}.

\subsection{The maximum principle}\label{AppendixSec:PMPproof}
~

The following theorem is an adaptation of \cite[Theorem 23, Chapter VI, \S 35]{PBGM}, in the spirit of  \cite{Lee_Markus}, using the perturbation cone constructed in the previous section.
\begin{thm}[Maximum principle]\label{extrem:thm}
Consider the constrained system (\ref{eq:state_space}), (\ref{eq:initial_condition}), (\ref{eq:input_constraint}) (\ref{eq:state_const}). Let $x^{(\bar{u},x_{0})}$ be a regular trajectory associated with the piecewise continuous control $\bar{u}\in \UU$ such that $x^{(\bar{u},x_{0})}(t_{1}) \in \partial R_{t_{1}}(x_{0})$ for some $t_{1}>0$. Then, there exists a non zero absolutely continuous $\eta^{\bar{u}}$ and piecewise continuous multipliers $\mu^{\bar{u}}_i\geq 0$, $i = 1,\dots,p$ satisfying,  for almost all $t \leq t_1$:
\begin{equation}
\dot{\eta}^{\bar{u}}(t)= - \left( \frac{\partial f}{\partial x}(x^{(\bar{u},x_{0})}(t),\bar{u}(t))\right)^T\eta^{\bar{u}}(t) + \sum_{i=1}^p \mu^{\bar{u}}_i(t)\frac{\partial g_i}{\partial x}^T(x^{(\bar{u},x_{0})}(t),\bar{u}(t))
\end{equation}
\begin{equation}\label{eq:CompSlackCond}
\mu^{\bar{u}}_i(t)g_i(x^{(\bar{u},x_{0})}(t),\bar{u}(t)) = 0 \quad \forall i\in\{1,\dots,p\}
\end{equation}
such that, if we define the dualised Hamiltonian
\begin{equation}\label{eq:HamiltonianDef}
\mathcal{H}(x,u,\eta,\mu) \triangleq \eta^Tf(x,u) + \sum_{i = 1}^p \mu_i g_i(x,u),
\end{equation}
it satisfies
\begin{equation}\label{barriercond-eta} 
\max_{u\in U} \mathcal{H}(x^{(\bar{u},x_{0})}(t),u,\eta^{\bar{u}}(t),\mu^{\bar{u}}(t)) = \mathcal{H}(x^{(\bar{u},x_{0})}(t),\bar{u}(t),\eta^{\bar{u}}(t),\mu^{\bar{u}}(t)) = constant \quad \mathrm{a.e.}~ t \leq t_1
\end{equation}

\end{thm}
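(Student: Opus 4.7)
The plan is to follow the classical Pontryagin maximum principle strategy \cite{PBGM,Lee_Markus}, using as the key ingredient the tangent perturbation cone $\mathcal{K}_{t_1}\subset\RR^n$ generated at time $t_1$ by the admissible needle perturbations constructed in Section~\ref{AppendixSec:ElementaryPert}. The first step is a separation argument. By \eqref{perturbationResult}, for every $\delta x\in \mathcal{K}_{t_1}$ and every sufficiently small $\ee>0$ the perturbed endpoint $x_{\pi}(t_1)=x^{(\bar{u},x_0)}(t_1)+\ee\,\delta x+O(\ee^2)$ lies in $R_{t_1}(x_0)$. A standard Brouwer-degree argument applied to the smooth map $\pi\mapsto x_{\pi}(t_1)$, controlling the $O(\ee^2)$ remainder uniformly in $\pi$, shows that if $\mathcal{K}_{t_1}=\RR^n$ then $x^{(\bar{u},x_0)}(t_1)\in\Int(R_{t_1}(x_0))$, contradicting the assumption. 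Hence $\mathcal{K}_{t_1}$ is contained in a closed half-space, and by Hahn-Banach there exists $\eta_0\neq 0$ satisfying $\eta_0^T\,\delta x\leq 0$ for all $\delta x\in\mathcal{K}_{t_1}$.

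I would then propagate $\eta_0$ backwards along $x^{(\bar{u},x_0)}$: define $\eta^{\bar{u}}$ as the absolutely continuous solution of \eqref{adjoint-eta} with $\eta^{\bar{u}}(t_1)=\eta_0$, and set $\mu^{\bar{u}}(t)=-\Lambda^{\bar{u}}(t)^T\eta^{\bar{u}}(t)$ with $\Lambda^{\bar{u}}$ well-defined and piecewise continuous thanks to the rank assumption (A4). The relation \eqref{VarAdjConstant}, which is obtained from the chain rule combined with the variational equation \eqref{eq:VariationalEquation} and the definition of $\Lambda^{\bar{u}}$, then gives
$$\eta^{\bar{u}}(t)^T\delta x(t)=\eta_0^T\delta x(t_1)\leq 0 \qquad \forall\,\delta x\in\mathcal{K}_{t_1}.$$
For each continuity point $\tau$ of $\bar{u}$ and each $v\in U(x^{(\bar{u},x_0)}(\tau))$, inserting the one-atom needle perturbation of \eqref{needlePerturbation:deltaX} and dividing by $l>0$ yields the pointwise inequality
$$\eta^{\bar{u}}(\tau)^T f(x^{(\bar{u},x_0)}(\tau),v)\leq \eta^{\bar{u}}(\tau)^T f(x^{(\bar{u},x_0)}(\tau),\bar{u}(\tau)).$$
The complementary slackness \eqref{eq:CompSlackCond} is built into the construction of $\Lambda^{\bar{u}}$ (only active rows of $\partial g/\partial u$ enter), while the sign $\mu_i^{\bar{u}}\geq 0$ is obtained by specialising $v$ to push infinitesimally into the interior of the $i$-th active constraint and using the one-sided inequality above.

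To pass from maximisation over the constraint-dependent set $U(x)$ to the unconstrained maximisation of the dualised Hamiltonian $\mathcal{H}$ over the whole of $U$, I would invoke the Karush-Kuhn-Tucker theorem applied to the minimisation of $u\mapsto -\eta^{\bar{u}}(t)^T f(x^{(\bar{u},x_0)}(t),u)$ under the convex constraints $g_i(x^{(\bar{u},x_0)}(t),\cdot)\leq 0$ and $\gamma_j\leq 0$: regularity (A4) provides the constraint qualification, convexity (A5) makes the KKT first-order conditions \eqref{eq:HamiltonianKKT} also sufficient, and the multipliers $\mu_i^{\bar{u}}$ constructed above are precisely the KKT multipliers, so \eqref{barriercond-eta} follows for a.e.\ $t\leq t_1$. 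The constancy of $\mathcal{H}$ along the trajectory is then obtained by the usual autonomous-system argument: on each continuity interval of $\bar{u}$, the time derivative of $t\mapsto \mathcal{H}(x^{(\bar{u},x_0)}(t),\bar{u}(t),\eta^{\bar{u}}(t),\mu^{\bar{u}}(t))$ vanishes thanks to the Hamiltonian form of the state and adjoint equations together with complementary slackness, while at jump times of $\bar{u}$ the continuity of the maximised value follows directly from \eqref{barriercond-eta}.

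The main obstacle is the separation step, which requires a careful Brouwer-based proof that $\mathcal{K}_{t_1}$ is a true tangent cone to $R_{t_1}(x_0)$ at $x^{(\bar{u},x_0)}(t_1)$; the $O(\ee^2)$ remainder in \eqref{perturbationResult} must be controlled uniformly across all finite convex combinations of needle perturbations, and the admissibility (constraint preservation) of these combinations relies crucially on the implicit-function construction of Section~\ref{AppendixSec:ElementaryPert}, hence on (A4). A secondary difficulty is piecewise continuity of $\mu^{\bar{u}}$ across instants where the active index sets $\II(x^{(\bar{u},x_0)},\bar{u})$ or $\JJ(\bar{u})$ change cardinality, which again rests on (A4) through the continuous dependence of the KKT multipliers on the data.
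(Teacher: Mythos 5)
Your proposal takes essentially the same route as the paper: the paper itself only constructs the constraint-preserving needle perturbations and the tangent perturbation cone in Appendix~\ref{AppendixSec:ElementaryPert}, defines $\eta^{\bar{u}}$, $\mu^{\bar{u}}=-\Lambda^{\bar{u}T}\eta^{\bar{u}}$ and the invariance \eqref{VarAdjConstant}, and then asserts Theorem~\ref{extrem:thm} as an adaptation of \cite{PBGM} and \cite{Lee_Markus} --- which is exactly the separation-of-the-perturbation-cone argument you outline. Your sketch is consistent with that intended proof and in fact supplies more of the classical details (the Brouwer-degree separation step, the backward adjoint propagation from a terminal normal $\eta_0$, and the KKT passage from maximisation over $U(x)$ to maximisation of the dualised Hamiltonian over $U$) than the paper writes down.
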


\bibliographystyle{siam}
\bibliography{Hybrid_bib}

\begin{thebibliography}{10}

\bibitem{berge}
{\sc C.~Berge}, {\em Topological Spaces}, Oliver and Boyd, Edinburgh and
  London, 1963.

\bibitem{Cast_Val}
{\sc C.~Castaing and M.~Valadier}, {\em Convex Analysis and Measurable
  Multifunctions}, vol.~580 of Lecture Notes in Mathematics, Springer, 1977.

\bibitem{Clarke}
{\sc F.H. Clarke}, {\em Optimization and Nonsmooth Analysis}, John Wiley \&
  Sons, Inc., New York, 1983.

\bibitem{Clarke_DiPinho}
{\sc F.H. Clarke and M.~de~Pinho}, {\em Optimal control problems with mixed
  constraints}, SIAM J Control Optim., 48 (2010), pp.~4500--4524.

\bibitem{Clarke_et_al_springer}
{\sc F.H. Clarke, Yu.S. Ledyaev, R.J. Stern, and P.R. Wolenski}, {\em Nonsmooth
  Analysis and Control Theory}, Springer-Veriag, New York, 1998.

\bibitem{danskin}
{\sc J.~Danskin}, {\em The Theory of Max-Min}, Springer, 1967.

\bibitem{DeDona_siam}
{\sc J.A. De~Dona and J.~L{\'e}vine}, {\em On barriers in state and input
  constrained nonlinear systems}, SIAM J. Control Optim., 51 (2013),
  pp.~3208--3234.

\bibitem{Hartl_et_al}
{\sc R.F. Hartl, S.P. Sethi, and R.J. Vickson}, {\em A survey of the maximal
  principles for optimal control problems with state constraints}, SIAM Review,
  37 (1995), pp.~181--218.

\bibitem{heinonen}
{\sc J.~Heinonen}, {\em Lectures on {L}ipschitz analysis}, in Lectures at the
  14th Jyv{\"a}skyl{\"a} Summer School, August 2004.

\bibitem{Hestenes}
{\sc M.~R. Hestenes}, {\em Calculus of Variations and Optimal Control Theory},
  John Wiley, 1966.

\bibitem{Isaacs}
{\sc R.~Isaacs}, {\em Differential Games}, John Wiley {\&} Sons, Inc., 1965.

\bibitem{Lee_Markus}
{\sc E.~B. Lee and L.~Markus}, {\em Foundations of Optimal Control Theory}, The
  SIAM Series in Applied Mathematics, John Wiley \& Sons, Inc., New York, 1967.

\bibitem{JLbook}
{\sc J.~L{\'e}vine}, {\em Analysis and Control of Nonlinear Systems: A
  Flatness-Based Approach}, Mathematical Engineering, Springer, 2009.

\bibitem{NicotraNalGor_IFACE2014}
{\sc M.~Nicotra, R.~Naldi, and E.~Garone}, {\em Taut cable control of a
  tethered uav}, in Proceedings of the 19th International Federation of
  Automatic Control World Congress, vol.~19, 2014, pp.~3190--3195.

\bibitem{Pesch94apractical}
{\sc H.~J. Pesch}, {\em A practical guide to the solution of real-life optimal
  control problems}, Control and Cybernetics, 23 (1994).

\bibitem{PBGM}
{\sc L.~Pontryagin, V.~Boltyanskii, R.~Gamkrelidze, and E.~Mishchenko}, {\em
  The Mathematical Theory of Optimal Processes}, John Wiley {\&} Sons, Inc.,
  1965.

\bibitem{Sira}
{\sc H.~Sira-Ramirez and S.K. Agrawal}, {\em Differentially flat systems},
  Marcel Dekker, Inc., 2004.

\bibitem{yosida}
{\sc K.~Yosida}, {\em Functional Analysis}, {S}pringer-{V}erlag, 1971.

\end{thebibliography}

\end{document}